\documentclass[a4paper]{amsart}

\pdfoutput=1

\usepackage[utf8]{inputenc}
\usepackage[T1]{fontenc}
\usepackage{lmodern}
\usepackage{amsthm, amssymb, amsmath, amsfonts, mathrsfs, dsfont, esint}
\usepackage[colorlinks=true, pdfstartview=FitV, linkcolor=blue, citecolor=blue, urlcolor=blue,pagebackref=false]{hyperref}
\linespread{1.5}




\usepackage{microtype}





\newtheorem{thm}{Theorem}[section]

\newtheorem{lem}[thm]{Lemma}
\newtheorem{cor}[thm]{Corollary}
\theoremstyle{remark}

\theoremstyle{definition}

\renewcommand{\le}{\leqslant}
\renewcommand{\ge}{\geqslant}
\renewcommand{\leq}{\leqslant}
\renewcommand{\geq}{\geqslant}
\newcommand{\ls}{\lesssim}
\renewcommand{\subset}{\subseteq}

\newcommand{\B}{\mathbb{B}}

\newcommand{\N}{\mathbb{N}}

\newcommand{\Ll}{\left}
\newcommand{\Rr}{\right}

\newcommand{\1}{\mathbf{1}}
\newcommand{\R}{\mathbb{R}}

\newcommand{\Z}{\mathbb{Z}}
\newcommand{\X}{\mathcal{X}}
\newcommand{\Y}{\mathcal{Y}}

\renewcommand{\P}{\mathbb{P}}

\newcommand{\td}{\tilde}
\newcommand{\eps}{\varepsilon}

\numberwithin{equation}{section}

\title[Quantitative homogenization of degenerate random environments]{Quantitative homogenization of degenerate random environments}
\author{Arianna Giunti, Jean-Christophe Mourrat}

\address[Arianna Giunti]{Max Planck Institute for Mathematics in the Sciences, Leipzig, Germany}

\address[Jean-Christophe Mourrat]{Ecole normale sup\'erieure de Lyon, CNRS, Lyon, France}

\begin{document}

\begin{abstract}

We study discrete linear divergence-form operators with random coefficients, also known as random conductance models. We assume that the conductances are bounded, independent and stationary; the law of a conductance may depend on the orientation of the associated edge. We give a simple necessary and sufficient condition for the relaxation of the environment seen by the particle to be diffusive, in the sense of every polynomial moment. As a consequence, we derive polynomial moment estimates on the corrector.

\bigskip


\noindent \textsc{MSC 2010:} 35B27, 35K65, 60K37.

\medskip

\noindent \textsc{Keywords:} Quantitative homogenization, environment viewed by the particle, mixing of Markov chains, corrector estimate.

\end{abstract}
\maketitle
%
%
%
%
%
%
%
%
\section{Introduction}
\label{s.intro}

We study the homogenization of discrete divergence-form operators
\begin{equation}
\label{e.gen}
-\nabla \cdot a \nabla f(x) := \sum_{y \sim x} a((x,y)) (f(y) - f(x)),
\end{equation}
where $a = (a(e))_{e \in \B}$ is a family of independent random variables indexed by the nearest-neighbor, unoriented edges of the graph $\Z^d$, { $d > 2$}. We assume that the coefficients $a(e)$ take values in $[0,1]$, that the law of $a(e)$ depends only on the orientation of the edge $e$, and that none of these $d$ probability laws is a Dirac mass at~$0$. In this setting, we give a necessary and sufficient condition for the relaxation of the ``environment viewed by the particle'' to be diffusive in the sense of every polynomial moment. As the name implies, this process can be described in terms of the random walk with generator given by~\eqref{e.gen}. We will rather define the flow of its semigroup directly by means of the PDE \eqref{E1} below. Denoting by $u_t$ the solution to \eqref{E1} with bounded, local and centered initial condition $g$, we show that the property
\begin{equation}
\label{e.c1}
\mbox{for every  $p \ge 1$,} \quad \sup_{t \ge 1} \, t^{\frac d 2} \, \langle |u_t|^{2p} \rangle^{\frac 1 p} < \infty
\end{equation}
holds as soon as
\begin{equation}
\label{a.mom.bounds}
\mbox{for every $q \ge 1$,} \quad \langle \bigl(\sup_{1 \le i \le d} a(e_i)\bigr)^{-q}	\rangle < \infty,
\end{equation}
where $(e_1,\ldots, e_d)$ is the canonical basis of $\Z^d$, and $\langle\, \cdot \, \rangle$ denotes the expectation with respect to the law of $\{a(e)\}$. Moreover, if \eqref{e.c1} holds for a single ``generic'' bounded and local initial datum $g$, then \eqref{a.mom.bounds} holds as well. Finally, we show that if the initial condition $g$ in \eqref{E1} is the divergence of a local bounded function and if \eqref{a.mom.bounds} holds, then \eqref{e.c1} can be improved to
\begin{equation}
\label{e.c1+}
\mbox{for every $p \ge 1$ and $\eps > 0$,} \quad  \sup_{t \ge 1} \, t^{\frac d 2+1-\eps} \,\langle |u_t|^{2p} \rangle^{\frac 1 p} < \infty.
\end{equation}
A rather degenerate example of environment satisfying \eqref{a.mom.bounds} can be constructed by letting $a(e)$ be i.i.d.\ Bernoulli random variables in $(d-1)$ directions, and letting $a(e) = (1+E_e)^{-1}$ with $(E_e)$ i.i.d.\ exponential random variables in the remaining direction.

As shown in \cite{M,GNO} and recalled below, these estimates imply a range of other quantitative homogenization results, including bounds on the corrector. They can also be used to prove quenched central limit theorems for the associated random walk. 

Under the condition that the coefficients $a(e)$ are uniformly bounded away from~$0$ and infinity, the estimate \eqref{e.c1} and the stronger estimate \eqref{e.c1+} with $\eps = 0$ were proved in \cite{GNO}. We refer to \cite{AS, GNO3, GO5, AKM2} for more recent developments under this assumption. Bounds on the corrector under assumptions similar to ours and for $d \ge 3$ were obtained in \cite{lno}. The case of coefficients that are bounded away from $0$ but not from infinity was considered in \cite{M,bumo}.

Our approach is inspired by the strategy of \cite{GNO}, which rests on quenched heat kernel bounds. In the context of degenerate environments, quenched diffusive bounds on the heat kernel are false in general. However, under the condition \eqref{a.mom.bounds}, we will be able to control the anomalous behavior of the heat kernel, in the sense of every polynomial moment, by exploiting the method presented in \cite{MO}. We then show that these weaker bounds are sufficient to imply \eqref{e.c1} and \eqref{e.c1+}.

\smallskip

Our long-term goal is to develop a comparable strategy in the context of interacting particle systems, in particular to study the relaxation of the environment viewed by a tagged particle in the symmetric exclusion process. We expect the results of this paper to be a first step in this direction. Loosely speaking, in the present work, we leverage on the existence of one ``good direction'' where conductances are well-behaved. For the exclusion process, we hope to benefit from the good behavior of the model in the \emph{time} direction, as illustrated for instance by \cite[Lemma~5.3]{MO}.

The results we present here shed light on the associated process of the random walk among random conductances. This is the Markov process with infinitesimal generator given by \eqref{e.gen}. In this view, the corrector provides us with harmonic coordinates that turn the walk into a martingale. As is well-known, these coordinates allow to show an \emph{annealed} invariance principle for the random walk, under very general conditions on the conductances \cite{kipvar,dfgw}. The qualifier ``annealed'' indicates that convergence in law is only known if one averages over the environment as well as on the trajectories of the walk. When the conductances are uniformly elliptic, it was quickly realized \cite{osada} that the statement can be improved to a \emph{quenched} invariance principle: that is to say, one that holds for almost every realization of the environment. What needs to be shown is that the corrector, evaluated at the position of the random walk, is of lower order compared with the position of the walk itself, with probability one with respect to the environment. By general arguments, one only knows that the corrector is sublinear in an $L^2$-averaged sense, and this is not sufficient in itself to guarantee a quenched result. One possibility to overcome this difficulty is to show that the walk is sufficiently ``spread out'' (in the sense that it satisfies heat kernel estimates), so that the averaged information on the corrector becomes sufficient to conclude. This was the route explored in a majority of papers on the subject \cite{osada, sidszn, matpia, berbis, bispre, mathieu, abdh}; see however \cite{fankom1, barham,ger1,ger4} for approaches more similar to ours. The results we derive here give much more precise information than these earlier works, since Corollary~\ref{c.corrector} below implies that the corrector is not only sublinear, but in fact grows slower than any power of the distance, with probability one.

\subsection{Notation and main result}

We say that $x,y \in \Z^d$ are neighbors, and write $x \sim y$, when $|x-y| = 1$. This endows $\Z^d$ with a graph structure, so that we may introduce the associated set of unoriented edges $\B$. 
Throughout the paper, we will typically denote points of $\Z^d$ by $x,y,z$, and edges in $\B$ by $b,e$. For a given edge $e\in \B$, we write $\underline{e}$ and $\overline e$ to denote its two endpoints, with the convention that $\overline e - \underline e = e_i$ for some $i \in \{1,\ldots, d\}$, where we recall that $(e_1,\ldots, e_d)$ is the canonical basis of $\Z^d$. We identify the vector $e_i \in \Z^d$ with the edge $(0,e_i)$, for each $i \in \{1,\ldots d\}$. 

The space of ``environments'' we consider is $\Omega := [0,1]^{\B}$. The group $\Z^d$ naturally acts by translations on $\Omega$ in the following way: for every $x\in \Z^d$ and $a= (a(e))_{e\in \B} \in \Omega$, we define
\begin{equation}
\label{e.def.tau}
\tau_x a := (a (x+e))_{e\in \B},
\end{equation}
where for $e\in \B$, we write $x+e := (x+\underline e, x+\overline e )$.
We consider a random $a = (a (e))_{e \in \B} \in \Omega$ whose law we denote by $\langle \, \cdot \, \rangle$. We assume 
the family of random variables $a = (a(e))$ to be independent and stationary, i.e. for every $x\in \Z^d$, the
random variables $\tau_xa$ and $a$ have the same law. In other words, the random variables $(a(e))$ are independent, and the law of $a(e)$ only depends on the orientation of the edge $e$. We assume that for every $e$, $\P[a(e) = 0] < 1$, since otherwise the model would truly be defined on a 
lower-dimensional space.

\medskip

For a random variable $\xi: \Omega \rightarrow \R$ and a fixed edge $b\in \B$, we define
\begin{align*}
D_b \xi = D_b\xi(a) := \xi(\tau_{\overline b}a) - \xi(\tau_{\underline b}a),
\end{align*}
and simply write $D \xi $ for the $d$-dimensional random vector defined as
$$
(D\xi)_i:= D_{e_i}\xi.
$$
We observe that for every $p\in [1,+\infty]$ the operator $D: L^p(\Omega) \rightarrow L^p(\Omega)^d$ is bounded and that its adjoint in $L^2(\Omega)$, which we denote by $D^*: L^2(\Omega)^d \rightarrow L^2(\Omega)$, is defined as
$$
D^*\xi:=\sum_{i=1}^d D^*_i\xi_i(a), \qquad \text{where} \quad  D_i^*\xi_i:= \xi(\tau_{-e_i}a) - \xi(a).
$$
Given a random variable $g \in L^1(\Omega)$, with $ \langle g \rangle = 0$, our goal is to understand the relaxation to equilibrium of $u : \R_+ \times \Omega \to \R$, solution of
\begin{equation}\label{E1}
\begin{cases}
& \partial_tu_t+D^*aDu_t=0 \\
& u_0 = g,
\end{cases}
\end{equation} 
where
$$
D^*aDu_t(a) := \sum_{i=1}^d D^*_i(a(e_i)D_iu_t(a))= \sum_{e\ni 0}a(e) D_e u_t(a).
$$
Whenever no confusion occurs, we write $u_t$ instead of $u_t(a)$. For $N \in \N$, we say that a function $g : \Omega \to \R$ is local with support of size $N$ if $g$ depends only on a finite number of conductances $\{ a(e^{(1)}), \ldots, a(e^{(N)})\}$.
Here is our main result.
\begin{thm}
\label{t.main} Under the moment condition \eqref{a.mom.bounds}, the following statements hold.
\begin{itemize}
\item[(a)] For every $p \in [1,\infty)$, there exists a constant  $C=C(d,p)< +\infty$ such that if $g : \Omega \to \R$ is local with support of size $N$, bounded and centered, then 
\begin{align}\label{Teo1}
\Ll\langle |u_t|^{2p} \Rr\rangle^{1/p} \le C N^{2} ||g||^2_{L^\infty(\Omega)} \, t^{-\frac d 2 }.
\end{align}
\item[(b)]For every $p \in [1,\infty)$ and $\eps > 0$, there exists a constant $C=C(d,p,\eps) < \infty$ such that if $f: \Omega \to \R$ is local with support of size $N$ and bounded, and if $g = D^* f$, then
\begin{align}\label{Teo2}
\Ll\langle |u_t|^{2p} \Rr\rangle^{1/p} \leq C N^{ 2}||f||^2_{L^\infty(\Omega)}\,  t^{-\Ll(\frac d 2 + 1-\eps\Rr)}.
\end{align}
\end{itemize}
\end{thm}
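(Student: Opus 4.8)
The plan is to carry out the programme announced in the introduction: transfer the question to the random walk among the random conductances, invoke the annealed heat-kernel estimates that hold under \eqref{a.mom.bounds} through the method of \cite{MO}, and exploit the finite-range dependence of the data. Write $Q_t = e^{t\mathcal L}$ for the Markov semigroup on $L^2(\Omega)$ generated by $\mathcal L := -D^*aD$, so that $u_t = Q_tg$; since $\mathcal L$ is the generator of the environment viewed from the particle, one has $Q_tg(a) = \E^a_0[g(\tau_{X_t}a)]$, where $X$ is the continuous-time walk of \eqref{e.gen} (which crosses $e$ at rate $a(e)$), and I write $p^a_t(x,y) = \P^a_x[X_t = y]$ for its quenched transition probabilities. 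Because $\mathcal L$ is symmetric on $L^2(\Omega)$, the case $p=1$ of either statement reads
\[
\langle u_t^2\rangle = \langle g\, Q_{2t}g\rangle = \sum_{y\in\Z^d}\bigl\langle g(a)\,p^a_{2t}(0,y)\,g(\tau_y a)\bigr\rangle .
\]

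For part (a) with $p=1$ I would show that only the near-diagonal part of this sum matters. Since $g$ is centered and depends on the $N$ conductances $a(e^{(1)}),\dots,a(e^{(N)})$, the factors $g(a)$ and $g(\tau_y a)$ decorrelate as soon as $y$ leaves an $O(N)$-point set, while $p^a_{2t}(0,y)$ must in turn be decoupled from both factors; I would obtain this decoupling by resampling the conductances near $0$ and near $y$ and controlling the resulting variation of $p^a_{2t}(0,y)$ through a Duhamel formula. Combined with the moment bound $\langle p^a_{2t}(x,y)^q\rangle^{1/q}\ls t^{-d/2}$ (with Gaussian-type spatial decay) from \cite{MO} — available for every $q$ precisely because \eqref{a.mom.bounds} postulates all moments — this collapses the sum to its near-diagonal contribution and yields \eqref{Teo1} for $p=1$, the quadratic dependence on $N$ reflecting that at the level of the walk $g$ behaves like a superposition of $N$ localized sources. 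For general $p$ I would run the same scheme one degree higher: expand $\langle u_t^{2p}\rangle$ as $\sum_{y_1,\dots,y_{2p}}\langle\prod_k p^a_t(0,y_k)\prod_k g(\tau_{y_k}a)\rangle$, estimate the product of kernels by H\"older through $\langle p^a_t(0,y_k)^{2p}\rangle^{1/2p}\ls t^{-d/2}$, and use the finite range of $g$ and the same resampling argument to see that only the configurations $(y_1,\dots,y_{2p})$ that cluster into at most $p$ groups contribute at leading order; the ensuing combinatorics give \eqref{Teo1}.

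Part (b) is obtained by feeding one extra derivative into this machinery. When $g = D^*f$, the difference operator inside $D^*$ can be summed by parts onto the space variable of the kernel, so that each occurrence of $p^a_t$ in the above expansions is replaced by a discrete gradient $\nabla p^a_t$; substituting the bounds $\langle|\nabla p^a_t(0,y)|^q\rangle^{1/q}\ls t^{-(d+1)/2}$ and $\sum_y\langle|\nabla p^a_t(0,y)|^2\rangle\ls t^{-(d/2+1)}$ for the diagonal bound upgrades the time exponent from $d/2$ to $d/2+1$ and yields \eqref{Teo2}. The loss of $\eps$ is the price of the Meyers-type self-improvement of integrability used to pass from the natural $L^2$ estimate on $\nabla p^a$ to $L^q$ estimates: in a degenerate environment this improvement holds only up to an exponent slightly above $2$, which costs an arbitrarily small amount in the final exponent.

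The step I expect to be the main obstacle is the decoupling itself. In contrast with the uniformly elliptic setting of \cite{GNO}, there is no quenched diffusive bound on $p^a_t$ here — such a bound is in fact false — so one cannot simply pull the kernel out of the averages; instead one must combine the annealed moment bounds of \cite{MO} with a genuinely quantitative stability estimate for the heat kernel under a change of a single conductance, i.e.\ a Duhamel bound on $\partial_{a(e)}p^a_t(0,y)$, which unavoidably brings in $\nabla p^a$ and hence the gradient moment bounds and, for the sharp exponent in (b), a Meyers estimate for degenerate conductances. The real work will be to make this decoupling quantitative enough to survive the full $2p$-fold expansion, with constants that remain finite for every $p$ under assumption \eqref{a.mom.bounds}.
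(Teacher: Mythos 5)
Your starting identity $u_t(a)=\sum_y p^a_t(0,y)\,\bar g(a,y)$ and the idea of reducing $\langle |u_t|^{2p}\rangle$ to near-diagonal clusters is a legitimately different route from the paper (which instead runs an $L^p$ Efron--Stein inequality on $u_t$, a Duhamel formula for the Glauber derivative $\partial_e u_t$, and closes via a dissipation estimate and an ODE/buckling lemma). However, as written your plan rests on two inputs that are not available and are, in effect, the hard part of the problem. First, the kernel bounds you quote are far stronger than what the method of \cite{MO} gives under \eqref{a.mom.bounds}. What is provable (Lemmas~\ref{MO} and~\ref{LG}) is an on-diagonal bound $p_t(0,0)\le \X t^{-d/2}$ with a random constant having all moments, polynomially weighted $\ell^2$-in-space consequences such as \eqref{LG2a}, and a \emph{time-averaged, spatially summed} bound \eqref{LG2} on $\sqrt{a}\,\nabla p_t$ -- not on $\nabla p_t$ itself, and not pointwise in the edge. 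Your proposal needs pointwise annealed bounds $\langle|\nabla p^a_t(0,y)|^q\rangle^{1/q}\lesssim t^{-(d+1)/2}$ for every $q$, plus a Meyers-type improvement ``for degenerate conductances''. In an environment where conductances may vanish (e.g.\ the Bernoulli components allowed by \eqref{a.mom.bounds}) no Meyers estimate is available, and high-moment pointwise gradient bounds are exactly the kind of quenched/pointwise regularity that fails or is out of reach here; the paper's whole design (the weights $w(e)$ and random paths $\pi(e)$ of Lemma~\ref{w-mod}, which convert $\nabla$ into $\sqrt a\,\nabla$ along a path, and the random prefactors $\X,\Y_t$) exists precisely to avoid ever using such bounds. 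Note also that the paper's $\eps$-loss in \eqref{Teo2} does not come from a Meyers argument but from H\"older bookkeeping and taking $p$ large, so your explanation of where $\eps$ comes from points at a tool that does not exist in this setting.

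Second, the decoupling step is not closed. Since $p^a_{2t}(0,y)$ depends on \emph{all} conductances, the centering of $g$ only helps after you resample the conductances in the supports of $g$ and $g(\tau_y\cdot)$, and the resampling error is a Duhamel term of the form $\int_0^{2t}\nabla p^a_{2t-s}(e,y)\,(a(e)-\tilde a(e))\,\nabla p^{a^e}_s(0,e)\,ds$, evaluated at single (random) edges. These error terms are not lower order: they are exactly of the size of the quantity you are trying to bound, they must be iterated $2p$ times in your cluster expansion, and controlling them requires either the unproved pointwise gradient moments above or some mechanism to absorb them. The paper faces the same structural difficulty (its Duhamel error contains $\nabla\bar u_s(a^e,e)$ at one random edge) and resolves it not by kernel estimates alone but by transferring to $\sqrt a\,\nabla u$ along the paths of Lemma~\ref{w-mod}, bounding the result by the Dirichlet form, and then buckling via Lemma~\ref{Mono} (which relates $\langle|Du_s\cdot aDu_s|^p\rangle$ to $-\frac{d}{dt}\langle u_s^{2p}\rangle$) together with the ODE Lemma~\ref{ODE}. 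Your proposal has no analogue of this closing device: the sentence ``the ensuing combinatorics give \eqref{Teo1}'' is where the actual argument would have to live, and with only the estimates you invoke (annealed $t^{-d/2}$ kernel moments and an $L^2$-in-space gradient bound, which moreover holds only for $\sqrt a\,\nabla p$ and in time average) the expansion cannot be summed to the claimed $t^{-d/2}$, let alone to the $t^{-(d/2+1-\eps)}$ of part (b).
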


\subsection{Consequences of the main result}
As was shown in \cite[Section~9]{M} and \cite[Section~6]{GNO}, Theorem \ref{t.main} implies a host of other results of interest in stochastic homogenization. In particular, estimates on the corrector can be derived, by integration in time, from the relaxation to equilibrium of the solution to \eqref{E1} with $g = D^*(ae)$.

\begin{cor}\label{c.corrector}
{  Assume that} the moment condition \eqref{a.mom.bounds} holds, and let $e \in \mathbb{B}$. If $d > 2$, then there exists $\phi_e \in L^2(\Omega)$ solution to the equation
\begin{align}\label{corrector}
D^*a D\phi_e = -D^*a e .
\end{align} 
Moreover, $\phi_e$ is in $L^p(\Omega)$ for every $p \in [1,\infty)$.
\end{cor}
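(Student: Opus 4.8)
The plan is to obtain the corrector $\phi_e$ by integrating in time the solution $u_t$ of \eqref{E1} started from the (divergence-form) initial datum $g = D^*(a e)$, which is bounded and local, so that part~(b) of Theorem~\ref{t.main} applies. Concretely, I would set
\begin{equation*}
\phi_e := \int_0^\infty u_t \, \d t,
\end{equation*}
and verify that this integral converges in $L^2(\Omega)$ (and in fact in every $L^p(\Omega)$, $p<\infty$) and solves \eqref{corrector}. The first step is a near-$0$ bound: for $t \in (0,1]$ one has the trivial contraction estimate $\|u_t\|_{L^2} \le \|g\|_{L^2} < \infty$ coming from the fact that $D^* a D$ generates a contraction semigroup on $L^2(\Omega)$, so $\int_0^1 \|u_t\|_{L^2}\,\d t < \infty$. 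The second step is the tail: by Theorem~\ref{t.main}(b) with $p=1$ and, say, $\eps = 1/2$, we get $\langle |u_t|^2 \rangle \le C\, t^{-(d/2 + 1/2)}$ for $t \ge 1$, so $\int_1^\infty \|u_t\|_{L^2}\,\d t \le C \int_1^\infty t^{-(d/4+1/4)}\,\d t < \infty$ precisely because $d > 2$ makes the exponent $d/4 + 1/4 > 1$. (Here the ``$+1$'' gained in the divergence-form case is essential; part~(a) alone would only give $t^{-d/4}$, which is not integrable for $d = 3$.) The same computation with a general $p$ and $\eps$ small shows $\int_0^\infty \|u_t\|_{L^{2p}}\,\d t < \infty$, giving $\phi_e \in L^p(\Omega)$ for all finite $p$.

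It then remains to check that $\phi_e$ solves \eqref{corrector}. The idea is to integrate the evolution equation $\partial_t u_t = -D^* a D u_t$ in time: formally,
\begin{equation*}
D^* a D \phi_e = D^* a D \int_0^\infty u_t\, \d t = \int_0^\infty D^* a D u_t\, \d t = -\int_0^\infty \partial_t u_t\, \d t = u_0 - \lim_{t\to\infty} u_t = g = -D^*(a e),
\end{equation*}
using $\langle g \rangle = 0$ and the decay $\|u_t\|_{L^2}\to 0$ to identify the limit as $0$. To make this rigorous I would test against an arbitrary $\psi \in L^2(\Omega)$: since $D^* a D$ is bounded on $L^2$ (the $a(e)$ are in $[0,1]$ and $D$, $D^*$ are bounded), $\langle \psi, D^* a D u_t \rangle = \langle a D u_t, D\psi \rangle$, and one can exchange the (absolutely convergent) time integral with the bounded bilinear pairing; the remaining integral $\int_0^\infty \langle \partial_t u_t, \psi\rangle\,\d t = \langle u_\infty - u_0, \psi\rangle = -\langle g, \psi\rangle$ follows from the fundamental theorem of calculus for the $L^2$-valued map $t \mapsto u_t$ (which is $C^1$ since the generator is bounded) together with $u_t \to 0$ in $L^2$.

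The main obstacle is really the integrability of the tail, and this is exactly where the hypothesis $d > 2$ and the refined estimate \eqref{Teo2} enter; everything else is soft functional analysis on the contraction semigroup generated by the bounded operator $D^* a D$. A secondary point worth a line of care is that $g = D^*(a e)$ is indeed of the form $D^* f$ with $f = (a(e_1)e_1, \dots, a(e_d)e_d)$-type bounded local function and that it is centered (which holds since $\langle D^*_i \xi \rangle = 0$ for any $\xi$, by stationarity), so that Theorem~\ref{t.main}(b) is genuinely applicable; given that, the corollary follows.
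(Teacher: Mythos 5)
Your approach is exactly the one the paper intends: the text preceding the corollary states that the corrector bounds follow ``by integration in time'' of the solution to \eqref{E1} with $g = D^*(ae)$, and then defers the details to \cite[Proposition~4]{GNO}; your write-up supplies precisely those details (contraction bound near $t=0$, Theorem~\ref{t.main}(b) for the tail, and integration of the evolution equation to identify $D^*aD\phi_e$). Two small slips to fix: first, your explicit choice $\eps = 1/2$ gives the $L^2$-decay exponent $\frac d4 + \frac14$, which equals exactly $1$ when $d=3$, so the tail integral is borderline divergent there --- you need $\eps < \frac{d-2}{2}$ (any smaller fixed $\eps$ works, and the conclusion for all $d>2$ is unaffected); second, with $g = D^*(ae)$ your computation yields $D^*aD\phi_e = +D^*ae$, so you should either start from $g = -D^*(ae)$ or set $\phi_e = -\int_0^\infty u_t\,\d t$ to match the sign in \eqref{corrector}. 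With these cosmetic corrections the argument is complete and coincides with the paper's.
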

We refer to \cite[Proposition~4]{GNO} for the proofs of these results.
As another example, we can estimate the corrector $\phi_{e,\mu}$ with massive term $\mu > 0$, i.e. the solution of 
\begin{align*}
\mu \phi_{e,\mu}+ D^*aD\phi_{e,\mu}=-D^*a e \, .
\end{align*}
By \cite[Proposition~5]{GNO}, we obtain that for every $\eps > 0$ and $p \in [1,\infty)$,
$$
\begin{aligned}
 \sup_{\mu > 0} \, & \langle |\phi_{e,\mu}|^p \rangle^{\frac 1 p} < \infty .
\end{aligned}
$$

\subsection{On the necessity of the moment condition}
We now explain why our assumption (\ref{a.mom.bounds}) on the law of $(a(e))$ is necessary in order to have the optimal relaxation 
decay (\ref{e.c1}). In this subsection we introduce the notation $\lesssim$ for $\leq C$ where the constant $C$ only depends on the dimension $d$ of the lattice $\Z^d$.

\medskip

If (\ref{a.mom.bounds}) does not hold, then we can find $p_0 \geq 1$ and a sequence
$\{ \varepsilon_{p_0,n} \}_{n \in \N}$, $\varepsilon_{p_0,n} \downarrow 0$ such that for every $n\in \N$,
\begin{align}\label{NSC1}
\mathbb{P}(\sup_{i=1,...,d}a(e_i) \leq \varepsilon_{p_0,n}) = \Pi_{i=1}^d\mathbb{P}_i(a(e_i) \leq \varepsilon_{p_0,n}) \geq \varepsilon_{p_0,n}^{p_0}.
\end{align}
We show that the solution $u_t$ of
\begin{equation}\label{NSC4}
\begin{cases}
& \partial_tu_t+D^*aDu_t=0 \\
& u_0 = g,
\end{cases}
\end{equation}
with $g= a(\tilde e) - \langle a(\tilde e) \rangle$ for a fixed $\tilde e$ such that $0\notin \tilde e$, does not satisfy the bound~\eqref{e.c1}. We make this choice of initial datum $g$ for convenience, but as will be seen shortly, this is inessential. From (\ref{E1}), we may bound by stationarity
\begin{align*}
|\partial_t u_t| \lesssim \bigl (\sup_{e\ni 0}a(e)\bigr)\  ||u_t||_{L^\infty} =  \bigl(\sup_{i=1,...,d}a( \pm e_i)\bigr) \ ||u_t||_{L^\infty},
\end{align*}
and hence, by the maximum principle,
\begin{align*}
|\partial_t u_t| \lesssim \bigl(\sup_{i=1,...,d}a(\pm e_i)\bigr) \ ||g||_{L^\infty}.
\end{align*}
Therefore, if $ \bigl(\sup_{i=1,...,d} a(\pm e_i) \bigr) \leq \frac{1}{t^2}$, we get
\begin{align}\label{NSC3}
|u_t| \geq |g| - \frac{||g||_{L^\infty(\Omega)}}{t},
\end{align}
and thus
\begin{align}
\langle |u_t|^{2q} \rangle &\geq \langle |u_t|^{2q} \1_{\sup_i a(\pm e_i) \leq \frac{1}{t^2}}\rangle = \langle \ |u_t|^{2q} \ \bigl| \  \sup_i a(\pm e_i) \leq \frac{1}{t^2} \ \rangle \, \mathbb{P}( \sup_i a(\pm e_i) \leq \frac{1}{t^2} )\notag\\
&\stackrel{(\ref{NSC3})}{\gtrsim} \langle \bigl(|g| - \frac{||g||_{L^\infty(\Omega)}}{t} \bigr)^{2q} \ \bigl |  \  \sup_i a(\pm e_i) \leq \frac{1}{t^2} \ \rangle \, \mathbb{P}( \sup_i a(\pm e_i) \leq \frac{1}{t^2} )\notag\\
&\stackrel{a(\tilde e) \perp a(\pm e_i)}{=}\langle \bigl(|g| - \frac{||g||_{L^\infty(\Omega)}}{t} \bigr)^{2q} \ \rangle \mathbb{P}( \sup_i a(\pm e_i) \leq \frac{1}{t^2} )\notag\\
&= \langle \bigl(|g| - \frac{||g||_{L^\infty(\Omega)}}{t} \bigr)^{2q} \ \rangle \mathbb{P}( \sup_i a(e_i) \leq \frac{1}{t^2}\, , \, \sup_i a( - e_i) \leq \frac{1}{t^2} )\notag\\
&= \langle \bigl(|g| - \frac{||g||_{L^\infty(\Omega)}}{t} \bigr)^{2q} \ \rangle \bigl(\mathbb{P}( \sup_i a(e_i) \leq \frac{1}{t^2})\bigr)^2,
\end{align}
where in the last equality we use that $\sup_i a(e_i)$ and $\sup_i a(-e_i)$ are independent and have the same law.
For $\{t_n\}_{n \in \N}$ with $t_n^2 = \frac{1}{\varepsilon_{p_0,n}}$ we may apply (\ref{NSC1}) and estimate for every $n \in \N$
\begin{align*}
\langle |u_{t_n}|^{2q} \rangle^{\frac 1 q} \gtrsim \langle \bigl(|g| - \frac{||g||_{L^\infty(\Omega)}} \, {t_n} \bigr)^{2q} \ \rangle^{\frac 1 q} t_n^{-4\frac{p_0}{q}}\gtrsim \biggl(\langle |g|^{2q} \rangle^{\frac 1 q} -\frac{||g||^2_{L^\infty(\Omega)}}{t_n^2}\biggr) t_n^{-4\frac{p_0}{q}}.
\end{align*}
Thus, for any $q > \frac{8p_0}{d}$ we contradict (\ref{e.c1}).
\subsection{Organisation of the paper}
In the rest of the paper, we assume that the moment condition \eqref{a.mom.bounds} holds. We derive the necessary heat kernel bounds in Section~\ref{s.hk}, and proceed to prove Theorem~\ref{t.main} in Section~\ref{s.main}.

\section{Heat kernel bounds}
\label{s.hk}
We say that a random field $\zeta : \Omega \times \Z^d \rightarrow \R$ is stationary if for every $x\in \Z^d$, we have $\zeta(a,x)  =\zeta(\tau_x a, 0)$. Conversely, given a random variable $\xi : \Omega \rightarrow \R$, we define its stationary extension $\bar \xi : \Omega \times \Z^d \rightarrow \R$ as the random field given by $\bar \xi(a, x) := \xi(\tau_x a)$.
If the function $u : \R_+ \times \Omega \to \R$ solves (\ref{E1}), then its stationary extension $\bar u_t(x,a) = u_t(\tau_x a)$ is a solution in $\R_+ \times \Z^d$ of the parabolic PDE
\begin{align}\label{E2}
\begin{cases}
& \partial_t\bar u_t + \nabla^* a \nabla \bar u_t =0 \\
&\bar u_0 = \bar g,
\end{cases}
\end{align}
with $\nabla$ the spacial discrete gradient defined, for an edge $b\in \B$ and a random field $\zeta$, as 
\begin{align*}
\nabla \zeta (a,b) := \zeta(a, \overline b) - \zeta(a, \underline b),\\
a \nabla\zeta (a,b):= a(b)\nabla \zeta(a,b),
\end{align*}
and $\nabla^*$ the adjoint of $\nabla$ in $\ell^2(\Z^d)$.

\medskip

Let $p_t= p_t(a, x, y)$ be the parabolic Green function associated to the operator $\partial_t + \nabla^*a \nabla$, i.e. for every $y\in \Z^d$ the unique bounded solution in $\Z^d$ of
\begin{align}\label{E3}
\begin{cases}
& \partial_t p_t( a, \cdot , y) + \nabla^*a \nabla p_t( a, \cdot , y) =0 \\
&p_0(\cdot ,y) = \1_{\{ y \}}(\cdot ),
\end{cases}
\end{align}
with $\1_{y}$ being the indicator function defined for $x \in \Z^d$
$$
\1_{y}( x) = \begin{cases}
1 \ \ \text{if  $x = y$}\\
0 \ \ \text{ otherwise.}
\end{cases}
$$
For every $\alpha \in \R$, $x \in \R^d$ and $t \ge 0$, we write
\begin{align}\label{LG1}
\omega_\alpha(t, x):=\bigl( \frac{|x|^2}{t+1}+1)^{\frac{\alpha}{2}}.
\end{align}

\medskip

The goal of this section is to show the heat kernel upper bound summarized in the following Lemma~\ref{MO}, which we then lift to an estimate on the gradient of the heat kernel in Lemma~\ref{LG}.

\begin{lem}\label{MO}
Let $p_t(x,y)=p_t(a, x,y)$ be as in (\ref{E3}). There exists a random variable $\X$ such that for all $p\in [1, +\infty)$,
\begin{align}\label{G0}
\langle |\X|^p \rangle \leq C(p) < +\infty ,
\end{align}
and
\begin{align}\label{G1}
p_t(0,0) \le \frac{\X}{t^{d/2}}
\end{align}
or, equivalently
\begin{align}\label{G2}
\sum_{x \in \Z^d} p_{t/ 2}(x,0)^2 \le \frac{\X}{t^{d/2}}.
\end{align}
\end{lem}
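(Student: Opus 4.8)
The plan is to prove the on-diagonal upper bound \eqref{G1} by following the iteration scheme of \cite{MO}, which replaces the usual Nash--Moser machinery (that would require pointwise ellipticity) with an argument that only needs moment bounds on the degenerate conductances. First I would record the elementary energy identity for the Green function: differentiating $\sum_x p_t(x,0)^2$ in time gives $-2\sum_b a(b) (\nabla p_t(\cdot,0)(b))^2 \le 0$, so $t \mapsto \|p_t(\cdot,0)\|_{\ell^2}^2$ is nonincreasing, and $p_t(0,0) = \sum_x p_{t/2}(x,0) p_{t/2}(x,0) = \|p_{t/2}(\cdot,0)\|_{\ell^2}^2$ by the semigroup and symmetry properties; this gives the equivalence of \eqref{G1} and \eqref{G2} for free. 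The real content is to bound $\|p_{t/2}(\cdot,0)\|_{\ell^2}^2$ by $\X t^{-d/2}$, and for this one needs a Nash-type inequality that survives the degeneracy.

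The key step is a \emph{weighted} or \emph{averaged} Nash inequality: because the conductances are uniformly bounded above by $1$, the Dirichlet form is controlled above by the constant-coefficient one; what we lack is a lower bound on the Dirichlet form in terms of an $\ell^1 \to \ell^2$ interpolation. Here I would exploit the ``good direction'' hypothesis \eqref{a.mom.bounds}: along a direction $i$ where $a(e_i)$ is typically of order $1$, one recovers control on $\nabla_i$. More precisely, following \cite{MO}, one derives an inequality of the shape
\begin{align*}
\|f\|_{\ell^2}^{2 + 4/d} \le C \, \bigl(\text{weighted Dirichlet form of } f\bigr) \, \|f\|_{\ell^1}^{4/d} \, \Ma,
\end{align*}
where $\Ma = \Ma(a)$ is a random constant built from local averages of $\sup_i a(e_i)^{-1}$ along lattice paths, and which has all moments finite precisely because of \eqref{a.mom.bounds} (here one uses independence of the conductances and a union/Borel--Cantelli-type bound to control the worst local average over the relevant scales). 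Feeding this into the energy dissipation identity and solving the resulting differential inequality $\frac{d}{dt}\Phi(t) \le -c \Ma^{-1} \Phi(t)^{1+2/d}$ for $\Phi(t) = \|p_t(\cdot,0)\|_{\ell^2}^2$ (using $\|p_t(\cdot,0)\|_{\ell^1} \le 1$ by conservation of mass) yields $\Phi(t) \le C \Ma^{d/2} t^{-d/2}$, so one may take $\X = C \Ma^{d/2}$; the moment bound \eqref{G0} then follows from the moment bounds on $\Ma$.

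I expect the main obstacle to be the construction of the random constant $\Ma$ and the verification that it has all polynomial moments. The degenerate Nash inequality is not translation-invariant in a pointwise sense — the ``good direction'' conductance can still be atypically small on a given edge — so one must average over a block of the right size (growing like a power of $t$, or handled dyadically), show that the block average of $(\sup_i a(e_i))^{-q}$ concentrates, and patch these estimates together uniformly in $t \ge 1$ while keeping the final random variable $\X$ independent of $t$. This is exactly the technical heart of the method in \cite{MO}, and adapting their Lemma on space-time averages (the analogue of their \cite[Lemma~5.3]{MO}) to the present purely spatial setting, with the supremum over directions in place of a time average, is where the care is needed. Once Lemma~\ref{MO} is in place, lifting it to the gradient estimate in Lemma~\ref{LG} is a standard Caccioppoli/localization argument combined with \eqref{G2}.
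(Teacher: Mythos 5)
Your overall strategy is the right one and is essentially the paper's: the equivalence of \eqref{G1} and \eqref{G2} via the semigroup property and symmetry of $p_t$, and the on-diagonal bound by showing that the environment fits the degenerate heat-kernel framework of \cite{MO}, with a random constant $\X$ given by a supremum over scales of local averages of inverse weights whose moments are controlled by a maximal-function estimate. The paper implements this by verifying that the environment is ``$w$-moderate'' (Lemma~\ref{w-mod}) and then invoking \cite[Theorem~3.2]{MO} and its maximal-function corollary as a black box, which yields $\X=\bigl(\sup_r r^{-d}\sum_{|\ov e|\le r}w^{-q}(e)\bigr)^r$; your plan to re-run the Nash iteration with a random constant is the same argument unpacked.

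There is, however, a genuine gap at the heart of your sketch: the input to the Nash inequality. You write that ``along a direction $i$ where $a(e_i)$ is typically of order $1$, one recovers control on $\nabla_i$,'' and you build $\Ma$ from local averages of $\sup_i a(e_i)^{-1}$. Controlling the gradient in a single good direction yields only a one-dimensional Nash inequality (exponent $4$ in place of $4/d$), hence decay $t^{-1/2}$ rather than $t^{-d/2}$. The whole difficulty lies in the transverse directions, whose conductances may vanish with positive probability (e.g.\ Bernoulli), so the Dirichlet form gives no direct control on $|f(z+e_1)-f(z)|$, and no amount of moment information on $\sup_i a(e_i)^{-1}$ repairs this. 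The missing idea is the detour-path construction of Lemma~\ref{w-mod}: set $w(e)^{-1}:=\inf\bigl\{\sum_{b\in\pi}a(b)^{-1}\ :\ \pi\ \text{joins the endpoints of}\ e\bigr\}$, and for a bad edge $e=(z,z+e_1)$ exhibit a competitor path that walks $k$ steps in the good direction $e_i$ until it meets a translate of $e$ with conductance at least $\eps$ (an event with geometric tail in $k$, which uses the non-degeneracy assumption $\P[a(e)=0]<1$ in \emph{every} direction, an input separate from \eqref{a.mom.bounds}), crosses there, and walks back. By Cauchy--Schwarz along the path this gives $w(e)|\nabla f(e)|^2\le\sum_{b\in\pi(e)}a(b)|\nabla f(b)|^2$ with $\langle w(e)^{-q}\rangle<\infty$ for all $q$ and all edges, which is the actual hypothesis needed to run the iteration of \cite{MO}. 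Without this construction your weighted Nash inequality, and hence the bound $\Phi(t)\lesssim\Ma^{d/2}t^{-d/2}$, does not follow.
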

\begin{lem}\label{LG} 
For every $ \alpha > \frac{d}{2} +1$, there exists $C=C(\alpha, d) < +\infty$ such that for every $t\in \R^+$ it holds 
\begin{align}
 &\omega_{\alpha-1}(t,x)p_t(x,0) \leq C \frac{\sqrt{\overline{\mathcal{X}}(0)\overline{\mathcal{X}}(x)}}{t^{\frac d 2}},\label{LG2a}\\
\sum_{b\in \mathbb{B}^d}&\omega_\alpha^{2}(\underline b ,t)|\sqrt{a}\nabla p_t(b,0)|^2 \leq C\frac{\Y_{t}}{(1+t)^{\frac{d}{2}+1}},\label{LG2}
\end{align}
where $\overline{\mathcal{X}}(x):= \mathcal{X}(\tau_x a)$ is the stationary extension of the random variable $\mathcal{X}$ defined in Lemma \ref{MO} and
\begin{align}\label{LG2b}
\Y_{t}:= \X^{\frac 3 2} {(1+t)}^{-\frac{d}{2}}\sum_z \omega_{\frac d 2 +2}^{-2}(z, t)\sqrt{\overline{\X}(z)}.
\end{align}
Moreover, for every $p\in [1, +\infty)$ it holds
\begin{align}\label{LG2c}
\langle |\sup_{t > 0}\Y_t|^p \rangle^{\frac 1 p} \leq C(p,d) < +\infty.
\end{align}
\end{lem}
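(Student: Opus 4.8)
The plan is to derive the pointwise heat kernel bound \eqref{LG2a} from Lemma~\ref{MO} by a standard Nash-type iteration combined with the on-diagonal bound, then to upgrade this to the weighted gradient estimate \eqref{LG2} via an energy (Caccioppoli-type) argument with the exponential-type weight $\omega_\alpha$, and finally to control the moments of $\sup_{t>0}\Y_t$ using the moment bound \eqref{G0} and summability of $\omega_{d/2+2}^{-2}(\cdot,t)$ in dimension $d>2$.

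First I would establish \eqref{LG2a}. Starting from Lemma~\ref{MO}, $p_t(0,0)\le \X t^{-d/2}$, and by stationarity $p_t(x,x)=\bar\X(x)t^{-d/2}$; by the semigroup and Cauchy--Schwarz, $p_t(x,0)\le \sqrt{p_{t}(0,0)\,p_{t}(x,x)}\cdot$ (after splitting $t\to t/2+t/2$) $\le \sqrt{\bar\X(0)\bar\X(x)}\,t^{-d/2}$. To insert the spatial weight $\omega_{\alpha-1}(t,x)$ one needs the off-diagonal decay: this is where one invokes a Davies/Aronson-type argument, i.e. conjugating the semigroup by $e^{\psi}$ for a Lipschitz function $\psi$ with $|\nabla\psi|\le \lambda$ and optimizing $\lambda$, which on a graph with bounded conductances produces a bound of the form $p_t(x,0)\le C\sqrt{\bar\X(0)\bar\X(x)}\,t^{-d/2}\,e^{-c|x|\wedge|x|^2/t}$ (or, since we only want polynomial weights, it suffices to extract $\omega_{\alpha-1}(t,x)^{-1}$ from this Gaussian tail). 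This yields \eqref{LG2a} for any $\alpha$.

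Next, for the gradient bound \eqref{LG2}, I would test the equation \eqref{E3} (for $y=0$) against $\omega_\alpha^2(\cdot,t)\,p_t(\cdot,0)$ and integrate over $\Z^d$. The diffusive term gives $\sum_b a(b)|\nabla p_t(b,0)|^2\,\omega_\alpha^2(\underline b,t)$ plus a commutator term involving $\nabla\omega_\alpha^2$, which is controlled by $|\nabla\omega_\alpha(b,t)|\lesssim \omega_\alpha(b,t)/\sqrt{1+t}$ and absorbed (for the leading part) using Young's inequality; the time-derivative term, after noting $\partial_t\omega_\alpha^2\lesssim \omega_\alpha^2/(1+t)$, contributes a term of size $(1+t)^{-1}\sum_x \omega_\alpha^2(x,t)p_t(x,0)^2$. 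Using \eqref{LG2a} to bound $p_t(x,0)^2\le C\,\bar\X(0)\bar\X(x)\,t^{-d}\,\omega_{\alpha-1}^{-2}(t,x)$ — but one must be slightly careful: we still have a leftover $\omega_\alpha^2\omega_{\alpha-1}^{-2}=\omega_2$ which is not summable, so instead one uses a version of \eqref{LG2a} with a larger exponent, namely $\omega_{d/2+2}$ as it appears in \eqref{LG2b}, writing $\omega_\alpha^2 p_t^2\le C\X^{3/2}\bar\X(z)^{1/2}t^{-d}\,\omega_{d/2+2}^{-2}(z,t)$ after splitting the powers of $\X$; summing in $z$ and integrating the differential inequality in time from $t/2$ to $t$ produces exactly $\Y_t/(1+t)^{d/2+1}$ with $\Y_t$ as in \eqref{LG2b}. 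The main obstacle here is bookkeeping the weights so that the residual power of $\omega$ is summable in dimension $d>2$ while keeping the powers of $\X$ (and its stationary translates) at a level whose moments we can still control.

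Finally, for \eqref{LG2c}, I would bound $\sup_{t>0}\Y_t$ by observing that $(1+t)^{-d/2}\sum_z\omega_{d/2+2}^{-2}(z,t)\sqrt{\bar\X(z)}$ is, up to constants, an average of $\sqrt{\bar\X(z)}$ against the probability-like weights $(1+t)^{-d/2}\omega_{d/2+2}^{-2}(z,t)$ (whose total mass is bounded uniformly in $t$ precisely because $d/2+2>d/2$, i.e. $\sum_z(|z|^2/(t+1)+1)^{-(d/2+2)}\lesssim (1+t)^{d/2}$), so by Jensen's inequality $\langle|\sup_t\Y_t|^p\rangle^{1/p}\lesssim \langle \X^{3p/2}\rangle^{1/(3p/2)\cdot 3/2}\cdot\langle\bar\X(0)^{p/2}\rangle^{1/(p/2)\cdot 1/2}$ by Hölder (using stationarity of $\bar\X$ and that the $t$-dependence has been averaged out, so the supremum over $t$ costs nothing), and both factors are finite by \eqref{G0}. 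This closes the proof.
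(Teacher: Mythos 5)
Your argument for \eqref{LG2a} is a workable alternative (the paper instead cites \cite[Proposition~3.3 or 3.4]{MO} to upgrade the on-diagonal bound of Lemma~\ref{MO} to the weighted $\ell^2$-bound $\sum_x\omega_\alpha^2(x,t)p_t^2(x,0)\lesssim \X t^{-d/2}$, and then combines the Chapman--Kolmogorov identity with the submultiplicativity $\omega_\alpha(t,x)\lesssim\omega_\alpha(t/2,x-z)\omega_\alpha(t/2,z)$ and Cauchy--Schwarz), though your Davies-type conjugation is left at the level of a sketch. The serious problem is in your treatment of \eqref{LG2}. Testing the equation against $\omega_\alpha^2 p_t$ and integrating the resulting differential inequality from $t/2$ to $t$ only controls the \emph{time average} $\int_{t/2}^t\sum_b\omega_\alpha^2(\underline b,s)|\sqrt a\nabla p_s(b,0)|^2\,ds$; the weighted Dirichlet form is not monotone in $s$, so no pointwise-in-time bound follows from this. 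This is not ``bookkeeping'': it is the missing idea of the proof. The paper first proves exactly this time-averaged Caccioppoli bound (its \eqref{LG7}), and then converts it into a bound at a fixed time $T$ by writing $\sqrt a\nabla p_T(b,0)=\fint_{T/3}^{2T/3}\sum_z\sqrt a\nabla p_t(b,z)\,p_{T-t}(z,0)\,dt$ and applying Jensen's inequality in $t$ and in $z$ (using $\sum_z p_s(z,0)=1$). It is precisely this step that produces the sum $\sum_z\omega_{d/2+2}^{-2}(z,t)\sqrt{\overline\X(z)}$ appearing in $\Y_t$: the factor $\sqrt{\overline\X(z)}$ comes from applying \eqref{LG2a} to $p_{T-t}(z,0)$ at the intermediate point $z$, and the time-averaged bound is applied to the family of kernels $p_t(\cdot,z)$. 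Your attempt to reconstruct $\Y_t$ by ``splitting the powers of $\X$'' in the lower-order term of the energy identity does not and cannot yield this structure.

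The step for \eqref{LG2c} is also flawed as written. For each \emph{fixed} $t$, Jensen's inequality with the normalized weights $(1+t)^{-d/2}\omega_{d/2+2}^{-2}(z,t)$ does give $\langle|\cdot|^p\rangle\lesssim\langle\X^{p/2}\rangle$, but the supremum over $t$ sits \emph{inside} the expectation, and $\langle\sup_t(\cdots)^p\rangle$ is not bounded by $\sup_t\langle(\cdots)^p\rangle$; the claim that ``the supremum over $t$ costs nothing'' is exactly what needs proof. The quantity $\sup_{t>0}(1+t)^{-d/2}\sum_z\omega_{d/2+2}^{-2}(z,t)\sqrt{\overline\X(z)}$ is a maximal function of the stationary field $\sqrt{\overline\X}$ over balls of radius $\sim\sqrt t$, and its $L^p$-boundedness requires a maximal inequality. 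The paper isolates this as Lemma~\ref{MaxF}, proved in the appendix via a dyadic decomposition in $|z|/\sqrt t$, a weak-type $(1,1)$ bound resting on the ergodic maximal function theorem of \cite{AK}, and Marcinkiewicz interpolation. You should either invoke such a maximal theorem explicitly or reproduce that argument; as it stands your Hölder/Jensen computation does not address the supremum.
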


\medskip

The proof of Lemma~\ref{MO} consists in showing that the environments we consider are ``$w$-moderate'' in the sense defined in \cite{MO}. 

\begin{lem}\label{w-mod}
There exists a family of non-negative random variables $\{ w(e) \}_{e\in \mathbb{B}}$ and a family of nearest-neighbor paths $\{\pi(e)\}_{e \in \mathbb{B}}$ such that the following properties hold:
\begin{itemize}
 \item[(i)] For every $e\in \mathbb{B}^d $ and $q\in [1, +\infty)$
 \begin{align}\label{W2}
  \langle |w(e)|^{-q} \rangle \leq C(d,q) < +\infty;
 \end{align}
 \item[(ii)] Let $\xi: \Omega \rightarrow \R$ be a random variable and $\zeta: \Omega \times \Z^d \rightarrow \R$ a random field; for every $e \in \mathbb{B}^d$, the path $\pi(e)$ connects the two endpoints of $e$, it is such that its length $|\pi(e)|$ satisfies, for every $q\in [1, +\infty)$,
\begin{align}\label{W3}
\langle |\pi(e)|^q \rangle \leq C(d,q) < +\infty,
\end{align}
and it holds
 \begin{align}
 w(e)|D_e\xi(a)|^2 &\le \sum_{b\in\pi(e)} a(b)|D_b \xi(a)|^2 ,\label{W1}\\
 w(e)|\nabla\zeta(a, e)|^2 &\le \sum_{b\in\pi(e)} a(b)|\nabla\zeta(a, b)|^2. \label{W1b}
\end{align}
\item[(iii)] Both $w(\cdot)$ and $\pi(\cdot)$ are stationary;
\end{itemize}
\end{lem}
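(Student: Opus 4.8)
The plan is to construct, for each edge $e \in \B$, a short random path $\pi(e)$ connecting the endpoints of $e$ that uses mostly the ``good direction'', together with a weight $w(e)$ measuring how degenerate that path is. The key structural observation behind \eqref{a.mom.bounds} is that at each vertex $x$ at least one of the $d$ coordinate conductances $a((x,x+e_i))$ is typically of order one; more precisely, letting $\sigma(x) := \operatorname{argmax}_{1\le i \le d} a((x,x+e_i))$ (with an arbitrary deterministic tie-breaking rule), the random variable $\rho(x) := \max_i a((x,x+e_i))$ satisfies $\langle \rho(x)^{-q}\rangle < \infty$ for all $q$ by \eqref{a.mom.bounds} and stationarity. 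The field $x \mapsto (\sigma(x),\rho(x))$ is stationary, and $\rho(x)$ depends only on the $2d$ edges at $x$, hence $(\rho(x))_{x}$ is a finite-range dependent field with all negative moments finite.

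Next I would build the paths. Fix an edge $e = (\underline e, \overline e)$ with $\overline e - \underline e = e_j$. To go from $\underline e$ to $\overline e$, instead of using the single edge $e$ (whose conductance $a(e)$ may be tiny and is \emph{not} controlled by \eqref{a.mom.bounds}, which only controls the coordinate max), I detour: from $\underline e$, take the ``good'' edge at $\underline e$, i.e.\ move in direction $\pm e_{\sigma(\underline e)}$ to a neighbor $y$; from $y$, keep following good edges for a geometrically distributed number of steps until one reaches a vertex from which $\overline e$ is accessible, and then symmetrically come back. A clean way to make this precise and to guarantee that the path actually closes up is the standard ``two independent good edges'' trick: with positive probability the good direction at $\underline e$ is $e_j$ itself (pointing toward $\overline e$) or the good direction at $\overline e$ is $-e_j$ (pointing toward $\underline e$), in which case $|\pi(e)| = 1$ and $w(e) \gtrsim \rho$; otherwise one routes through an auxiliary vertex, paying a bounded number of extra edges whose conductances are, by construction, each of ``good'' type. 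Because the good field is finite-range dependent, the number of steps needed before hitting a usable configuration has exponential tails, giving \eqref{W3}; one should be slightly careful to make the path a deterministic function of the environment in a local window, so that stationarity \eqref{W1b} of $\pi(\cdot)$ is immediate.

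Given the path, \eqref{W1} and \eqref{W1b} are then pure Cauchy--Schwarz: if $\pi(e) = (b_1,\dots,b_L)$ with $L = |\pi(e)|$, then $|D_e\xi|^2 = \big|\sum_{k} D_{b_k}\xi\big|^2 \le L \sum_k |D_{b_k}\xi|^2 \le L \,\big(\max_k a(b_k)^{-1}\big) \sum_k a(b_k)|D_{b_k}\xi|^2$, and the same for $\nabla\zeta$; so one simply \emph{defines}
\[
w(e) := \Bigl(|\pi(e)| \, \max_{b \in \pi(e)} a(b)^{-1}\Bigr)^{-1},
\]
which makes \eqref{W1}--\eqref{W1b} hold by construction. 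The negative-moment bound \eqref{W2} is the place where everything comes together: $w(e)^{-1} = |\pi(e)| \max_{b\in\pi(e)} a(b)^{-1}$, and by construction every edge $b$ on $\pi(e)$ is a ``good'' edge at its basepoint, so $a(b)^{-1} \le \rho(x_b)^{-1}$ for the corresponding vertex $x_b$ on the path; hence $w(e)^{-q} \le |\pi(e)|^q \sum_{x \in \pi(e)} \rho(x)^{-q}$, and one bounds $\langle w(e)^{-q}\rangle$ by a sum over the possible (random) paths, using Hölder to split the path-length factor, the exponential tail of $|\pi(e)|$ from \eqref{W3}, and the uniform bound $\sup_x \langle \rho(x)^{-q'}\rangle < \infty$ for the conductance factor.

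The main obstacle is the path construction itself: one must simultaneously ensure (i) the path genuinely connects $\underline e$ to $\overline e$ using only edges whose conductance is the local coordinate-max (so that \eqref{a.mom.bounds} applies edgewise), (ii) its length has exponential tails despite the good direction varying from vertex to vertex, and (iii) the whole construction is a stationary (translation-covariant), locally-determined function of the environment so that \eqref{W1b} and part (iii) hold. Getting all three at once — in particular closing the loop back to $\overline e$ without using the uncontrolled edge $e$ — is the delicate bookkeeping step; once the paths are in hand, (i) and (ii) above reduce to Cauchy--Schwarz and a routine moment estimate.
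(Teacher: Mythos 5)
Your overall architecture (an explicit detour path plus Cauchy--Schwarz to define $w$, then moment bounds on $|\pi(e)|$ and on the conductances along the path) is the same as the paper's, and your reduction of \eqref{W1}--\eqref{W1b} to Cauchy--Schwarz is fine. But the path construction, which you yourself flag as ``the delicate bookkeeping step'', contains a genuine gap that cannot be repaired within your framework. Any nearest-neighbor path from $\underline e$ to $\overline e=\underline e+e_j$ must contain at least one edge whose displacement is $\pm e_j$ (project onto the $j$-th coordinate). You insist that every edge of $\pi(e)$ be a ``good'' edge, i.e.\ the coordinate-maximizer at its basepoint, so your path can only use an $e_j$-oriented edge at a vertex $z$ with $\sigma(z)=j$. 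Nothing in the hypotheses guarantees such vertices exist: if, say, $a(e_j)$ is supported in $\{0,\tfrac12\}$ while $a(e_1)$ is uniform on $[\tfrac34,1]$, then $\sigma\neq j$ almost surely, and no path consisting only of good edges can ever change its $j$-th coordinate. Assumption \eqref{a.mom.bounds} controls only the maximum over directions, never the conductance of a prescribed direction, so the crossing step is genuinely uncontrolled by the moment assumption alone.

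The paper closes exactly this hole by invoking the separate standing non-degeneracy assumption $\P[a(e)=0]<1$ for every orientation, which your argument never uses: it fixes $\eps>0$ with $\P[a(e_j)>\eps]>0$, walks from $\underline e$ along the single good direction $e_i$ for a geometric number $K$ of steps until it finds an $e_j$-oriented edge of conductance $>\eps$, crosses there, and walks back; the crossing edge is not ``good'', merely bounded below by a deterministic $\eps$, and the exponential tail of $K$ supplies \eqref{W3}. A second, more minor, structural difference: the paper defines $w(e)^{-1}$ variationally as the infimum of $\sum_{b\in\pi}a(b)^{-1}$ over \emph{all} connecting paths and uses the explicit detour only as a competitor to bound $\P(w^{-1}(e)>x)$, which makes stationarity and the bound $|\pi(e)|\le w(e)^{-1}$ (via $a^{-1}\ge 1$) automatic. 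Defining $w$ directly from one constructed path, as you do, would also be acceptable, but only once a valid, translation-covariant path construction is supplied; as written, your construction does not close up, so \eqref{W2} and \eqref{W3} are not established.
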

\begin{proof} 
In this proof the notation $\lesssim$ stands for $\leq C$ with $C=C(d,q)$.
For every edge $e\in \B$, we define
\begin{align}\label{wmod1}
w(e)^{-1}:= \inf\{\sum_{b\in \pi(e)} a^{-1}(b) \ : \ \mbox{$\pi(e)$ connects the two endpoints of $e$} \}.
\end{align}
Since $a^{-1}$ is bounded from below, there exists a path that achieves the infimum above. We choose one according to a fixed, deterministic tie-breaking rule, and denote it by $\pi(e)$. 
With this definition of weights and paths, the point (iii) immediately follows by stationarity of $a$. We also have
\begin{align*}
|D_e\xi|^2 &= |\sum_{b\in \pi(e)}a(b)^{-\frac 1 2}a(b)^{\frac 1 2}D_b\xi|^2\\
& \leq \bigl(\sum_{b\in \pi(e)}a(b)^{-1}\bigr) \bigl(\sum_{b\in \pi(e)}a(b)|D_b \xi|^2 \bigr) \stackrel{(\ref{wmod1})}{=}  w(e)^{-1}\bigl(\sum_{b\in \pi(e)}a(b)|D_b \xi|^2 \bigr),
\end{align*}
i.e. inequality (\ref{W1}). Note that by definition of $\nabla$, an analogous calculation yields \eqref{W1b}. Moreover, since $a^{-1} \ge 1$, we have
\begin{align*}
|\pi(e)| \leq \sum_{b \in \pi(e)}a(b)^{-1},
\end{align*}
and thanks to (\ref{wmod1}), the bound (\ref{W3}) is directly implied by (\ref{W2}). In order to show this last bound, we want to argue that for every $q\in [1, +\infty)$ and $x \gg 1$
\begin{align}\label{wmod3}
\mathbb{P}( w^{-1}(e) > x ) \lesssim x^{-q}.
\end{align}
We proceed in the following way: Thanks to assumption (\ref{a.mom.bounds}) and independence, it holds for $y \in \R$ to be fixed below that
\begin{align*}
\mathbb{P}( (\sup_i a(e_i))^{-1} \geq y)= \prod_{i=1}^d\mathbb{P}( a(e_i)^{-1}\geq y)\lesssim y^{-2qd},
\end{align*}
and therefore there exists a (random) $i=i(y)$ such that
\begin{align}\label{wmod4}
\mathbb{P}( a(e_i)^{-1}\geq y )\lesssim y^{-2q}.
\end{align}
The main idea is to explicitly construct a path $\tilde\pi(e)$, connecting the two endpoints of $e$ for which we have some control on the quantity
$\P( \sum_{b\in \tilde \pi(e)} a(b)^{-1} > x)$:
From that, thanks to definition (\ref{wmod1}), we also obtain the same bound for (\ref{wmod3}).
Without loss of generality, let us assume that $e=(z,z+e_1)$ for some $z\in \Z^d$. Therefore, if $i(y)=1$ we just choose $\tilde \pi(e)= e$ and get by stationarity that for every $x > y$
\begin{align}\label{wmod7}
\P( \sum_{b\in \tilde \pi(e)} a(b)^{-1} > x)  = \mathbb{P}(a(e)^{-1} > x) \stackrel{(\ref{wmod4})}{\lesssim}  y^{-2q},
\end{align}
i.e. the bound (\ref{W2}). If otherwise $i \neq 1$, then by stationarity and our assumption on the random variables $\{ a(b) \}_{b \in \B}$ to be non-degenerate, we may fix a $\varepsilon > 0 $ (independent on $i$ and $x$) and consider
\begin{align*}
K:=\inf\{k \geq 0 \ : \ a((z + k\overline e_i, z+k\overline e_i +\overline e_1) > \varepsilon \},
\end{align*}
which satisfies 
\begin{align}\label{wmod4bis}
\mathbb{P}(K > k) \leq \exp{(-ck)},
\end{align}
for a positive constant $c= c(\varepsilon)$.
Therefore, we estimate for any $x > \frac{2}{\varepsilon}$
\begin{align}\label{wmod5}
\mathbb{P}&(w^{-1}(e) \geq x )=\mathbb{P}(w^{-1}(e) \geq x\ , \ K > k ) + \mathbb{P}(w^{-1}(e) \geq x\ , \ K \leq k ).
\end{align}
We control the first term on the r.h.s of (\ref{wmod5}) by
\begin{align}\label{wmod6}
\mathbb{P}(w^{-1}(e) \geq x\ , &\ K > k )\notag\\
& = \mathbb{P}(w^{-1}(e) \geq x\ \bigl| \ K > k )\ \mathbb{P}(K > k )\stackrel{(\ref{wmod4bis})}{\leq} \exp{(-ck)}.
\end{align}
For the second term the idea is two observe that, if $K \leq k$, then we might consider as path
$$
\tilde\pi(e):= \{\tilde b_1, ..., \tilde b_k , \tilde e , \tilde b_{k+1}, ..., \tilde b_{2k}\}
$$ 
the one starting from z, moving k steps in direction $e_i$, then moving in direction $e_1$ and finally going back with other k steps to $x+e_1$. 
Therefore,
\begin{align*}
\mathbb{P}( w^{-1}(e)& \geq x\ , \ K \leq k )\leq \mathbb{P}(\sum_{b\in \tilde \pi(e)}a^{-1}(b) \geq x\ , \ K \leq k ),
\end{align*}
and since by construction 
$$
|\tilde \pi(e)| = 2k +1 \ \ \text{ and} \ \ \sum_{b\in \tilde \pi(e)} a(b)^{-1} \leq \varepsilon^{-1} \sum_{j=1}^{2k}a(\tilde b_j)^{-1},
$$
we may control
\begin{align*}
\mathbb{P}( w^{-1}(e) \geq x \ &, \ K \leq k )\\
&\leq \mathbb{P}(\mbox{ $\ \exists \ j\in \{1,...,2k\}$ such that $a(\tilde b_j) \geq \frac{x}{4k}$  }, \ K \leq k ).
\end{align*} 
Independence and then stationarity hence yield
\begin{align*}
\mathbb{P}&( w^{-1}(e) \geq x\ , \ K \leq k )\leq 2k \ \mathbb{P}({a(e_i)^{-1} \geq \frac{x}{4k}}).
\end{align*}
Fixing now $k= x^\eta$ with $\eta < < 1$, we get
\begin{align*}
\mathbb{P}&( w^{-1}(e) \geq x\ , \ K \leq x^{\eta} )\leq x^{\eta}\,\mathbb{P}({a(e_i)^{-1} \geq \frac{x^{1-\eta}}{4}}),
\end{align*}
so that if we choose $y= \frac{x^{1-\eta}}{4} < x$ in (\ref{wmod4}), this turns into
\begin{align*}
\mathbb{P}&( w^{-1}(e) \geq x\ , \ K \leq x^{\eta} )\lesssim x^{-q},
\end{align*}
and (\ref{wmod6}) and (\ref{wmod7}) respectively into
\begin{align*}
\mathbb{P}( w^{-1}(e) \geq x\ , \ K > x^{\eta} ) &\leq \exp{(-c x^{\eta} )} \lesssim x^{-q},\\
 \mathbb{P}(a(e)^{-1} > x)  =\mathbb{P}(a(e)^{-1} > x) &\lesssim x^{-2(1-\eta)q} \lesssim x^{-q}
\end{align*}
By wrapping up the previous three inequalities we conclude (\ref{wmod3}) and hence (\ref{W2}).
\end{proof}
\begin{lem}\label{Inv.Pi}
For every $b \in \B$, let
\begin{align}\label{Piinv}
\pi^{-1}(b):= \{ e\in \mathbb{B} \ \ : \ \ b \in \pi(e) \}.
\end{align}
Then, for every $p\in [1, +\infty)$
\begin{equation}\label{Pi1}
\langle |\pi^{-1}(b)|^p \rangle \leq C(d,p) < +\infty .
\end{equation}
\end{lem}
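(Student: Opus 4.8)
The plan is to exploit that $\pi(e)$ is a nearest-neighbor path joining the two endpoints of $e$: if $b\in\pi(e)$, then in particular $\pi(e)$ passes through the endpoint $\un b$, so its length is at least the graph distance from $\un e$ to $\un b$, i.e. $|\pi(e)|\ge|\un e-\un b|$. Consequently
\[
|\pi^{-1}(b)|=\sum_{e\in\B}\1_{\{b\in\pi(e)\}}\le\sum_{e\in\B}\1_{\{|\pi(e)|\ge|\un e-\un b|\}}.
\]
By stationarity of $\pi(\cdot)$ (point (iii) of Lemma~\ref{w-mod}) the law of $|\pi^{-1}(b)|$ does not depend on $b$, so it suffices to treat one fixed $b$. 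Two elementary ingredients will be used: on the probabilistic side, Chebyshev's inequality together with the moment bound \eqref{W3} gives, for every $q\in[1,\infty)$, a constant $C(d,q)$ with $\P(|\pi(e)|\ge r)\le C(d,q)\,r^{-q}$ for all $e\in\B$ and $r\ge1$; on the combinatorial side, the number of edges $e$ with $|\un e-\un b|\le R$ is at most $C(d)R^d$.

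Next I would decompose $\B$ into the dyadic shells $S_k:=\{e\in\B:\ 2^k\le|\un e-\un b|+1<2^{k+1}\}$, $k\ge0$, and set $X_k:=\sum_{e\in S_k}\1_{\{b\in\pi(e)\}}$, so that $|\pi^{-1}(b)|=\sum_{k\ge0}X_k$. Each $X_k$ satisfies the crude deterministic bound $X_k\le|S_k|\le C(d)\,2^{kd}$, while on the other hand
\[
\P(X_k\ge1)\le\sum_{e\in S_k}\P(|\pi(e)|\ge 2^k)\le C(d,q)\,2^{kd}\,2^{-kq}.
\]
Since $X_k^p\le|S_k|^p\,\1_{\{X_k\ge1\}}$ pointwise, combining the two estimates yields $\langle X_k^p\rangle\le|S_k|^p\,\P(X_k\ge1)\le C(d,q)\,2^{kpd}\,2^{kd}\,2^{-kq}$, hence
\[
\langle X_k^p\rangle^{1/p}\le C(d,q)\,2^{k\left(d+\frac{d-q}{p}\right)}.
\]
Applying Minkowski's inequality in $L^p(\Omega)$ (legitimate as $p\ge1$ and $X_k\ge0$),
\[
\langle|\pi^{-1}(b)|^p\rangle^{1/p}\le\sum_{k\ge0}\langle X_k^p\rangle^{1/p}\le C(d,q)\sum_{k\ge0}2^{k\left(d+\frac{d-q}{p}\right)},
\]
and the geometric series converges as soon as $q>d(p+1)$. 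Because \eqref{W3} is available for every $q\in[1,\infty)$, this proves \eqref{Pi1} for every $p\in[1,\infty)$.

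The only point requiring care is that one must use \emph{both} the deterministic bound $X_k\le|S_k|$ and the probabilistic bound $\P(X_k\ge1)\lesssim 2^{k(d-q)}$: summing the individual probabilities $\P(b\in\pi(e))$ alone controls only the first moment of $|\pi^{-1}(b)|$, and it is the interpolation between the worst-case size of $X_k$ and the smallness of $\P(X_k\ge1)$ that upgrades this to control of every polynomial moment. Everything else is routine counting of edges in dyadic shells and an application of Minkowski's inequality.
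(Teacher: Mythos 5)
Your proof is correct and rests on the same two ingredients as the paper's: the geometric fact that an edge $e$ far from $b$ can only satisfy $b\in\pi(e)$ if $|\pi(e)|$ is at least the distance from $e$ to $b$ (since $\pi(e)$ is a connected nearest-neighbor path through both), combined with Chebyshev's inequality applied to the moment bound \eqref{W3}, which holds for every $q$. The only difference is bookkeeping: the paper bounds the tail $\mathbb{P}(|\pi^{-1}(b)|\ge k)$ via a pigeonhole step (among $k$ contributing edges one must lie at distance $\gtrsim k^{1/d}$ from $b$) followed by a union bound, whereas you estimate the moments directly through dyadic shells and the interpolation $X_k^p\le |S_k|^p\,\1_{\{X_k\ge1\}}$ — both routes deliver \eqref{Pi1} for every $p\in[1,\infty)$.
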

\begin{proof}
For a fixed edge $b\in \mathbb{B}$ and any $p\in [1, +\infty)$, let us consider
\begin{align*}
\mathbb{P}&(|\pi^{-1}(b)| \geq k ) = \mathbb{P}(\exists \ \ e_1, ..., e_{k} \ \ : \ \ \forall i \ \ \ b \in \pi(e_i)\ ).
\end{align*}
We observe that if there are $\sim k$ edges whose optimal path passes through b, then there must be and edge $e$ with $|b-e| \geq k^{\frac {1} {d}}$. Therefore,
\begin{align*}
\mathbb{P}&(|\pi^{-1}(b)| \geq k )\leq \mathbb{P}(\exists \ \ \tilde e \ \text{with}\ \ |\tilde e-b|\geq k^{\frac{1}{d}} \ \ : \ \ \tilde e \in \pi(b)\ ).
\end{align*}
The path $\pi(\tilde e)$ being connected, allows us to estimate
\begin{align*}
\mathbb{P}(|\pi^{-1}(b)|& \geq k )\leq \mathbb{P}(\exists \ \ \tilde e \ \text{with}\ \ |\tilde e-b|\geq k^{\frac{1}{d}} \ \ : \ |\pi(\tilde e)| > |b-\tilde e|\ )\\
&\leq \sum_{e : |b -e|> k^{\frac{1}{d}}} \mathbb{P}(|\pi(e)| > |b-e|) \simeq \sum_{n\sim k^{\frac{1}{d}}}^{+\infty} \sum_{|e-b|=n}\mathbb{P}(|\pi(e)|> n).
\end{align*}
Chebyshev's inequality yields for every $q \in [1; +\infty) $
\begin{align*}
\mathbb{P}&(|\pi^{-1}(b)| \geq k )\leq \sum_{n\sim k^{\frac{1}{d}}}^{+\infty} \sum_{|e-b|=n} n^{-q} \langle |\pi(e)|^q \rangle
\stackrel{(\ref{W3})-(iii)}{\leq} C(d,q) \sum_{n\sim k^{\frac{1}{d}}}^{+\infty} n^{d-1-q}.
\end{align*}
We may now choose q big enough, e.g. $q = 2(p+1) d $, to conclude
\begin{align*}
\mathbb{P}&(|\pi^{-1}(b)| \geq k ) \leq C(d,2(p+1)d) k^{-(2p+1)},
\end{align*}
which implies inequality (\ref{Pi1}) for every $p \in [1 , +\infty)$.
\end{proof}
We now show the following general result on stationary random fields.
\begin{lem}\label{SUM}
Let $\zeta: \Omega \times \mathbb{Z}^d \rightarrow \mathbb{R}$ be a stationary random field.
Then for every $p\in [1,+\infty)$, edges $e_0,b_0 \in \mathbb{B}$ and $\delta >0$ there exists a $C=C(d, p, \delta) < +\infty$ such that
\begin{align}
\langle | \sum_{b\in \pi(e_0)} \zeta(a,\underline b) |^p \rangle^{\frac 1 p} \leq C \langle |\zeta(a,0)|^{p(1+\delta)}\rangle^{\frac{ 1}{p(1+\delta)}},\label{SUM1}\\
\langle | \sum_{e\in \pi^{-1}(b_0)}{\zeta}(a,\underline e) |^p \rangle^{\frac 1 p} \leq C \langle |\zeta(a,0)|^{p(1+\delta)}\rangle^{\frac{ 1}{p(1+\delta)}}\label{SUM2},
\end{align}
whenever the r.h.s.\ of (\ref{SUM1})-(\ref{SUM2}) is finite.
\end{lem}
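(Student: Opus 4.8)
The plan is to prove \eqref{SUM1} and \eqref{SUM2} in parallel, by the same two‑step mechanism: first rewrite each sum over a random path as a sum over all edges against an indicator, and then decouple the indicator from the values of the field by a H\"older inequality that costs an arbitrarily small amount of integrability — which is exactly what the hypothesis with exponent $p(1+\delta)$ pays for. Consider \eqref{SUM1}. Since $\zeta$ is stationary, $\zeta(a,\underline b) = \zeta(\tau_{\underline b}a,0)$, so by stationarity of the law $\langle|\zeta(a,\underline b)|^{p(1+\delta)}\rangle = \langle|\zeta(a,0)|^{p(1+\delta)}\rangle$ for every $b\in\B$. Using the triangle inequality in $L^p(\Omega)$ over $b$, I would start from
\[
\Ll\langle\Ll|\sum_{b\in\pi(e_0)}\zeta(a,\underline b)\Rr|^p\Rr\rangle^{\frac1p}\le\sum_{b\in\B}\Ll\langle\1_{\{b\in\pi(e_0)\}}\,|\zeta(a,\underline b)|^p\Rr\rangle^{\frac1p},
\]
and then, for each fixed $b$, apply H\"older with exponents $1+\delta$ and $\frac{1+\delta}{\delta}$ to split the indicator off:
\[
\Ll\langle\1_{\{b\in\pi(e_0)\}}\,|\zeta(a,\underline b)|^p\Rr\rangle^{\frac1p}\le\P\bigl(b\in\pi(e_0)\bigr)^{\frac{\delta}{p(1+\delta)}}\,\langle|\zeta(a,0)|^{p(1+\delta)}\rangle^{\frac1{p(1+\delta)}}.
\]
Thus \eqref{SUM1} reduces to showing $\sum_{b\in\B}\P(b\in\pi(e_0))^{\delta/(p(1+\delta))}\le C(d,p,\delta)$.

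For that last estimate I would use that $\pi(e_0)$ is a connected path joining the two endpoints of $e_0$: if $b\in\pi(e_0)$, then $\underline b$ is a vertex visited by $\pi(e_0)$, hence lies within graph distance $|\pi(e_0)|$ of $\underline{e_0}$, so $|\pi(e_0)|\ge|\underline b-\underline{e_0}|$. Combined with Chebyshev's inequality and the moment bound \eqref{W3}, this gives, for every $q\in[1,\infty)$,
\[
\P\bigl(b\in\pi(e_0)\bigr)\le\P\bigl(|\pi(e_0)|\ge|\underline b-\underline{e_0}|\bigr)\le C(d,q)\,\bigl(1+|\underline b-\underline{e_0}|\bigr)^{-q}.
\]
Choosing $q$ large enough that $q\,\delta/(p(1+\delta))>d$, the series $\sum_{b\in\B}(1+|\underline b-\underline{e_0}|)^{-q\delta/(p(1+\delta))}$ converges to a constant depending only on $d,p,\delta$, which closes \eqref{SUM1}. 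The proof of \eqref{SUM2} is identical, writing $\sum_{e\in\pi^{-1}(b_0)}\zeta(a,\underline e)=\sum_{e\in\B}\1_{\{b_0\in\pi(e)\}}\zeta(a,\underline e)$ and using that $b_0\in\pi(e)$ forces $|\pi(e)|\ge|\underline e-\underline{b_0}|$, so \eqref{W3} again yields $\P(b_0\in\pi(e))\le C(d,q)(1+|\underline e-\underline{b_0}|)^{-q}$; one may alternatively feed in Lemma~\ref{Inv.Pi}, but it is not needed.

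The only genuine difficulty is the strong correlation between the random set indexing the sum (the path, or its inverse image under $\pi$) and the values $\zeta(a,\underline b)$ of the field along that set, which forbids factoring the expectation. The H\"older split with loss $\delta$ is what breaks this correlation, and this is precisely why the bound has to be stated with the slightly larger exponent $p(1+\delta)$: we can only afford the decoupling because $\P(b\in\pi(e_0))$ decays faster than any polynomial in $|\underline b-\underline{e_0}|$ — a consequence of the all‑moments bound \eqref{W3} — which more than offsets the counting factor from $\sum_{b\in\B}1$.
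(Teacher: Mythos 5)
Your proof is correct, but it follows a genuinely different and noticeably more streamlined route than the paper's. The paper proves \eqref{SUM2} by conditioning on the value $n$ of $\max_{e\in\pi^{-1}(b_0)}|e-b_0|$, replacing the random index set by the deterministic ball $B_{n+1}(\underline{b_0})$ on that event, and then performing two successive H\"older inequalities (one in the spatial variable $e$, one in the summation variable $n$), with a further resummation argument to recover the single moment $\langle|\zeta(0)|^{p(1+2\delta)}\rangle$; \eqref{SUM1} is treated analogously by conditioning on $|\pi(e_0)|=n$. You instead linearize the random sum as $\sum_{b\in\B}\1_{\{b\in\pi(e_0)\}}\zeta(a,\underline b)$, apply Minkowski's inequality in $L^p(\Omega)$ termwise, and decouple each indicator from the field value by a single H\"older with exponents $(1+\delta,\tfrac{1+\delta}{\delta})$, reducing everything to the summability of $\P(b\in\pi(e_0))^{\delta/(p(1+\delta))}$ over $b\in\B$. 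Both arguments ultimately rest on the same input — the super-polynomial decay of $\P(b\in\pi(e))$ in $|\underline b-\underline e|$, which follows from connectedness of the path, Chebyshev, and \eqref{W3} (this is the same observation as in Lemma~\ref{Inv.Pi}) — but your decomposition avoids the conditional expectations and the double H\"older entirely, and it handles \eqref{SUM1} and \eqref{SUM2} symmetrically. The only points worth making explicit in a final write-up are (i) the justification of Minkowski's inequality for the infinite sum (legitimate here since the resulting series of $L^p$-norms converges, and the sum itself is a.s.\ finite because $|\pi(e_0)|<\infty$ a.s.), and (ii) the remark that the constant in \eqref{W3} is uniform over edges $e$ by stationarity, which is what makes the bound on $\P(b_0\in\pi(e))$ uniform in $e$ for \eqref{SUM2}. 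Neither is a gap; your argument is complete and, if anything, yields the stated bound with the exponent $p(1+\delta)$ directly rather than via the paper's relabeling of $2\delta$ as $\delta$.
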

\begin{proof}
For the sake of simplicity, we skip the argument $a$ in $\zeta$ and write $\lesssim$ instead of $\leq C$, with $C$ depending on $d$, $p$ and $\delta$.
We start with (\ref{SUM2}): Let us fix $p \in [1, +\infty)$ and $\delta >0$. Since by (\ref{Pi1}) we have
$\langle \cdot \rangle$-almost surely that $|\pi^{-1}(z)| < +\infty$, we may write
\begin{align*} 
\langle | \sum_{e\in \pi^{-1}(b_0)} &\zeta(\underline e) |^p \rangle\\
& = \sum_{n=0}^{+\infty} \mathbb{P}(\max_{e\in \pi^{-1}(b_0)}|e-b_0| = n )\, \langle | \sum_{e\in \pi^{-1}(b_0)}{\zeta}(\underline e) |^p \bigr| \max_{e\in \pi^{-1}(b_0)}|e-b_0| = n \rangle \\
&\lesssim \sum_{n=0}^{+\infty} \mathbb{P}(\max_{e\in \pi^{-1}(b_0)}|e-b_0| = n ) \langle |\sum_{e\in B_{n+1}(\underline{b_0})}{\zeta}(\underline e) |^p \bigr| \max_{e\in \pi^{-1}(b_0)}|e-b_0| = n \rangle,
\end{align*}
and by H\"older's inequality in $e$
\begin{align*}
\langle |& \sum_{e\in \pi^{-1}(b_0)}{\zeta}(\underline e) |^p \rangle\\
&\lesssim \sum_{n=0}^{+\infty}(n+1)^{d(p - 1)} \mathbb{P}(\max_{e\in \pi^{-1}(b_0)}|e-b_0| = n )  \sum_{e\in B_{n+1}(\underline{b}_0)} \langle |\zeta(\underline e)|^p \bigr| \max_{e\in \pi^{-1}(b_0)}|e-b_0| = n \rangle.
\end{align*}
We now decompose the second term in the r.h.s. of the previous inequality as
\begin{align*}
\mathbb{P}(\max_{e\in \pi^{-1}(b_0)}|e-b_0| = n )= \mathbb{P}(\max_{e\in \pi^{-1}(b_0)}|e-b_0| = n )^{\frac{\delta}{1+\delta}} \mathbb{P}(\max_{e\in \pi^{-1}(b_0)}|e-b_0| = n )^{\frac{1}{1+\delta}},
\end{align*}
and thus rewrite
\begin{align*}
\langle |& \sum_{e\in \pi^{-1}(b_0)}{\zeta}(\underline e) |^p \rangle\\
&\lesssim \sum_{n=0}^{+\infty}(n+1)^{d(p - 1)}  \mathbb{P}(\max_{e\in \pi^{-1}(b_0)}|e-b_0| = n )^{\frac{\delta}{1+\delta}}\\
&\hspace{0.5cm}\times\mathbb{P}(\max_{e\in \pi^{-1}(b_0)}|e-b_0| = n )^{\frac{1}{1+\delta}}\sum_{e\in B_{n+1}(\underline{b}_0)} \langle |\zeta(\underline e)|^p \bigr| \max_{e\in \pi^{-1}(b_0)}|e-b_0| = n \rangle.
\end{align*}
Therefore, an application of H\"older's inequality with exponents $(1+2\delta, \frac{1+2\delta}{2\delta})$ in $n$ yields
\begin{multline}
\label{SUM3}
\langle | \sum_{e\in \pi^{-1}(b_0)} \overline{\zeta(\underline e)} |^p \rangle 
\lesssim \biggl(\sum_{n=0}^{+\infty}(n+1)^{d(p - 1) \frac{1+2\delta}{2\delta}}  \mathbb{P}(\max_{e\in \pi^{-1}(b_0)}|e-b_0| = n )^{\frac{1+2\delta}{2(1+\delta)}}\biggr)^{\frac{2\delta}{1+2\delta}} \\
\times\biggl( \sum_{n=0}^{+\infty}\mathbb{P}(\max_{e\in \pi^{-1}(b_0)}|e-b_0| = n )^{1+\frac{\delta}{1+\delta}}\bigl(\sum_{e\in B_{n+1}(\underline{b}_0)} \langle |\zeta(\underline e)|^p \bigr| \max_{e\in \pi^{-1}(b_0)}|e-b_0| = n \rangle \bigr)^{1+2\delta} \biggr)^{\frac{1}{1+2\delta}}.
\end{multline}
We now observe that the first term on the r.h.s. of (\ref{SUM3}) may be bounded by 
\begin{align}\label{SUM4}
\biggl( \sum_{n=0}^{+\infty}(n+1)^{d(p - 1) \frac{1+2\delta}{2\delta}}  \mathbb{P}(\max_{e\in \pi^{-1}(b_0)}|e-b_0| = n )^{\frac{1+2\delta}{2(1+\delta)}} \biggr)^{\frac{2\delta}{1+ 2\delta}} \lesssim 1.
\end{align}
This follows after noting that since
\begin{align*}
\mathbb{P}(\max_{e\in \pi^{-1}(b_0)}|e-b_0| = n ) \leq \mathbb{P}(\exists \ \tilde e \ \text{with} \ |\tilde e-b|\geq n \ : \ |\pi(e)|> n ),
\end{align*}
by the same reasoning of Lemma \ref{Inv.Pi} we have for every $M \geq 1$
\begin{align}\label{SUM5}
\mathbb{P}(\max_{e\in \pi^{-1}(b_0)}|e-b_0| = n ) \leq C(d,M) n^{-M},
\end{align}
and thus also (\ref{SUM4}) for $M$ large enough.
We turn to the second term on the r.h.s. of (\ref{SUM3}) and claim that
\begin{align}\label{SUM6}
\sum_{n=0}^{+\infty}\mathbb{P}(\max_{e\in \pi^{-1}(b_0)}|e-b_0| = n )^{1+\frac{\delta}{1+\delta}}\bigl(\sum_{e\in B_{n+1}(\underline{b}_0)} \langle |\zeta(\underline e)|^p \bigr| &\max_{e\in \pi^{-1}(b_0)}|e-b_0| = n \rangle \bigr)^{1+2\delta}\notag\\
&\lesssim \langle |\zeta(0)|^{p(1+2\delta)}\rangle.
\end{align}
Indeed, we write
\begin{align*}
& \sum_{n=0}^{+\infty}\mathbb{P}(\max_{e\in \pi^{-1}(b_0)}|e-b_0| = n )^{1+\frac{\delta}{1+\delta}}\bigl(\sum_{e\in B_{n+1}(\underline{b}_0)} \langle |\zeta(\underline e)|^p \bigr| \max_{e\in \pi^{-1}(b_0)}|e-b_0| = n \rangle \bigr)^{1+2\delta}\\
&= \sum_n \sum_m \1_{n}(m) \mathbb{P}(\max_{e\in \pi^{-1}(b_0)}|e-b_0| = m )^{\frac{\delta}{1+\delta}}\\
&\hspace{1cm}\times\mathbb{P}(\max_{e\in \pi^{-1}(b_0)}|e-b_0| = n)\bigl(\sum_{e\in B_{m+1}(\underline{b}_0)} \langle |\zeta(\underline e)|^p \bigr| \max_{e\in \pi^{-1}(b_0)}|e-b_0| = n \rangle \bigr)^{1 + 2\delta},
\end{align*}
and by H\"older's inequality first in the $e$-variable and then in $\langle \cdot \rangle$
\begin{align*}
&\sum_{n=0}^{+\infty}\mathbb{P}(\max_{e\in \pi^{-1}(b_0)}|e-b_0| = n )^{1+\frac{\delta}{1+\delta}}\bigl(\sum_{e\in B_{n+1}(\underline{b}_0)} \langle |\zeta(\underline e)|^p \bigr| \max_{e\in \pi^{-1}(b_0)}|e-b_0| = n \rangle \bigr)^{1+2\delta}\\
&\leq \sum_n \sum_m \1_{n}(m) m^{d\frac{1+2\delta}{2\delta}}\mathbb{P}(\max_{e\in \pi^{-1}(b_0)}|e-b_0| = m )^{\frac{\delta}{1+\delta}}\\
&\hspace{1cm}\times\mathbb{P}(\max_{e\in \pi^{-1}(b_0)}|e-b_0| = n)\sum_{e\in B_{m+1}(\underline{b}_0)} \langle |\zeta(\underline e)|^{p(1+2\delta)} \bigr| \max_{e\in \pi^{-1}(b_0)}|e-b_0| = n \rangle\\
&\leq \sum_m m^{d\frac{1+2\delta}{2\delta}}\mathbb{P}(\max_{e\in \pi^{-1}(b_0)}|e-b_0| = m )^{\frac{\delta}{1+\delta}}\\
&\hspace{1cm}\times \sum_{e\in B_{m+1}(\underline{b}_0)}\sum_n\mathbb{P}(\max_{e\in \pi^{-1}(b_0)}|e-b_0| = n) \langle |\zeta(\underline e)|^{p(1+2\delta)} \bigr| \max_{e\in \pi^{-1}(b_0)}|e-b_0| = n \rangle\\
&=\sum_m m^{d\frac{1+2\delta}{2\delta}}\mathbb{P}(\max_{e\in \pi^{-1}(b_0)}|e-b_0| = m )^{\frac{\delta}{1+\delta}}\sum_{e\in B_{m+1}(\underline{b}_0)}\langle |\zeta(\underline e)|^{p(1+2\delta)}\rangle.
\end{align*}
Our assumption that $\zeta$ is stationary thus implies
\begin{align*}
&\sum_{n=0}^{+\infty}\mathbb{P}(\max_{e\in \pi^{-1}(b_0)}|e-b_0| = n )^{1+\frac{\delta}{1+\delta}}\bigl(\sum_{e\in B_{n+1}(\underline{b}_0)} \langle |\zeta(\underline e)|^p \bigr| \max_{e\in \pi^{-1}(b_0)}|e-b_0| = n \rangle \bigr)^{1+2\delta}\\
&\simeq\langle |\zeta(0)|^{p(1+2\delta)}\rangle \sum_m m^{d(1+\frac{1+2\delta}{2\delta})}\mathbb{P}(\max_{e\in \pi^{-1}(b_0)}|e-b_0| = m )^{\frac{\delta}{1+\delta}}.
\end{align*}
Reasoning as for (\ref{SUM4}), we conclude inequality (\ref{SUM6}).
Inserting estimates (\ref{SUM4}) and (\ref{SUM6}) in (\ref{SUM3}) yields inequality (\ref{SUM2}), after relabeling $\delta = 2\delta$.

\medskip

We now prove (\ref{SUM1}) in an analogous way: Thanks to assumption (\ref{W3}), it holds the identity
\begin{align*}
\langle | \sum_{e\in \pi^{-1}(b_0)}{\zeta}(\underline e) |^p \rangle= \sum_{n=1}^{+\infty}\mathbb{P}(|\pi(e_0 )|= n)\langle | \sum_{e\in \pi^{-1}(b_0)}{\zeta}(\underline e) |^p \bigl |  |\pi(e_0 )|= n \rangle.
\end{align*}
We now reason exactly as in the argument for (\ref{SUM2}), this time relying directly on (\ref{W3}) and on Chebyshev's inequality to infer the analogous of (\ref{SUM5}), i.e. that for every $M \geq 1$
\begin{align*}
\mathbb{P}(|\pi(e_0)| = n ) \leq C(d,M) n^{-M}.
\end{align*}
\end{proof}

\begin{proof}[Proof of Lemma~\ref{MO}]
Thanks to Lemma \ref{w-mod}, (ii) we may apply Theorem 3.2 of \cite{MO} and obtain that there exist $r >0$ and $q > d$ such that
\begin{align*}
p_t(0,0) \leq t^{-\frac d 2} \biggl(\sup_{r}r^{-d}\sum_{|\overline{e}| \leq r} w^{-q}(e)\biggr)^r ,
\end{align*}
with $w(e)$ defined in Lemma \ref{w-mod}. It thus only remains to prove that 
$$
\X:= \biggl(\sup_{r}r^{-d}\sum_{|\overline{e}| \leq r} w^{-q}(e)\biggr)^r
$$
satisfies \eqref{G0}. This follows from Lemma \ref{w-mod}, (i) and the maximal function estimate (\cite{MO}, Corollary A.2 )
\begin{align*}
\langle |\sup_{r}r^{-d}\sum_{|\overline{e}| \leq r} w^{-q}(e)|^p \rangle^{\frac 1 p} \leq C(p) \langle |w^{-q}|^p \rangle^{\frac 1 p},
\end{align*}
for every $p \in (1, +\infty]$.
Estimate \eqref{G2} follows from \eqref{G1} thanks to the identity
\begin{align*}
p_t(0,0) = \sum_{z} p_{\frac t 2}^2(x,0).
\end{align*}
This, thanks to the symmetry of $p_t$, is in turn a particular case of 
\begin{align}\label{LG10a}
p_t(x,0)= \sum_z p_s (x, z)p_{t-s}(z, 0),
\end{align}
with $x \in \Z^d$ and $s \in (0 , t)$. To show \eqref{LG10a} it suffices to observe that since for every $s>0$ we have
\begin{align*}
\begin{cases}
& \partial_t p_{s+t} + \nabla^* a\nabla p_{s+t} =0 \ \ \mbox{ $t>0$}\\
&p_s(x ,0) =p_{s}(x ,0) ,
\end{cases}
\end{align*}
then the representation formula implies the semigroup property  
\begin{align*}
p_{s+t}(x,0)= \sum_z p_t( x, z)p_s(z, 0),
\end{align*}
which is equivalent to \eqref{LG10a} if we relabel $t=t+s$.
\end{proof}

Before proving Lemma \ref{LG}, we state the following auxiliary result, whose proof we postpone to the appendix.
\begin{lem}\label{MaxF}
Let $\alpha > \frac d 2 + 1 $ and let $\mathcal{Z}=\mathcal{Z}(a)$ be a non-negative random variable such that for every $p \in [1, +\infty)$ 
\begin{align}\label{MFb}
\langle |\mathcal{Z}|^p \rangle \leq C(p) < +\infty.
\end{align}
We then have
\begin{align}\label{MFa}
\langle |\sup_{t>0} \sum_{z \in \mathbb{Z}^d} \omega_\alpha^{-2}(z, t) \mathcal{Z}(\tau_z a)|^p \rangle \leq C(d, p,\alpha) < +\infty,
\end{align}
where the weight $\omega_\alpha$ is defined as in (\ref{LG1}).
\end{lem}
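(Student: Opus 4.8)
The plan is to reduce \eqref{MFa} to a genuine maximal-function estimate in the spatial variable, uniformly in $t$, and then invoke the discrete maximal inequality already used in the proof of Lemma~\ref{MO} (namely \cite[Corollary~A.2]{MO}). The key observation is that the weight $\omega_\alpha^{-2}(z,t) = \bigl(\frac{|z|^2}{t+1}+1\bigr)^{-\alpha}$ is, for each fixed $t$, a radially decreasing function of $z$ that is summable in $z$ precisely because $2\alpha > d$; and moreover its "profile" only depends on $t$ through the scale $\lambda := \sqrt{t+1}$. So I would first write, for fixed $t$ and $\lambda = \sqrt{t+1}$,
\begin{align*}
\sum_{z\in\Z^d} \omega_\alpha^{-2}(z,t)\,\mathcal{Z}(\tau_z a)
= \sum_{z\in\Z^d} \Bigl(1+\tfrac{|z|^2}{\lambda^2}\Bigr)^{-\alpha}\mathcal{Z}(\tau_z a),
\end{align*}
and decompose the $z$-sum into dyadic annuli $\{|z|\sim 2^k\}$ for $k\ge 0$ (with the innermost block $|z|\lesssim\lambda$ treated as a single ball). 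On the annulus $|z|\sim 2^k$ with $2^k \gtrsim \lambda$, the weight is $\sim (2^k/\lambda)^{-2\alpha}$, while the ball $B_{2^k}$ contains $\sim 2^{kd}$ points; hence that annulus contributes at most $(2^k/\lambda)^{-2\alpha}\,2^{kd}\cdot\bigl(2^{-kd}\sum_{|z|\le 2^k}\mathcal{Z}(\tau_z a)\bigr) \le (2^k)^{d-2\alpha}\lambda^{2\alpha}\, M\mathcal{Z}(0)$, where $M\mathcal{Z}(0):=\sup_{r\ge 1} r^{-d}\sum_{|z|\le r}\mathcal{Z}(\tau_z a)$ is the (stationary) maximal function of $\mathcal{Z}$ evaluated at the origin. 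The same bound holds for the central ball with $2^k$ replaced by $\lambda$. Summing the geometric series in $k$ over $2^k\gtrsim\lambda$ (which converges since $d-2\alpha<0$) gives $\sum_k (2^k)^{d-2\alpha}\lambda^{2\alpha} \lesssim \lambda^{d} = (t+1)^{d/2}$... which is not bounded in $t$; so I must instead keep the normalisation correct: the central ball has radius $\lambda$ and $\lambda^{-d}\sum_{|z|\le\lambda}\mathcal{Z} \le M\mathcal{Z}(0)$, contributing $\lambda^d\cdot M\mathcal{Z}(0)$ times the weight value $1$ — again $\lambda^d$. The fix is that $M\mathcal{Z}$ must be normalised the same way the summable weight is; concretely one writes $\sum_z \omega_\alpha^{-2}(z,t)\mathcal{Z}(\tau_z a) \le \bigl(\sum_z \omega_\alpha^{-2}(z,t)\bigr)\cdot \sup_{r\ge 1}\frac{\sum_{|z|\le r}\omega_\alpha^{-2}(z,t)\mathcal{Z}(\tau_z a)}{\sum_{|z|\le r}\omega_\alpha^{-2}(z,t)}$ is the wrong split too. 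The clean route, which I would actually carry out, is: bound $\sum_z \omega_\alpha^{-2}(z,t)\mathcal{Z}(\tau_z a)$ by $C_{\alpha,d}\,\sup_{r\ge 1} r^{-d}\sum_{|z|\le r}\mathcal{Z}(\tau_z a)$ with $C_{\alpha,d}$ independent of $t$, via Abel summation against the decreasing weight $\omega_\alpha^{-2}$ — writing $\omega_\alpha^{-2}(z,t) = \sum_{r\ge |z|} \bigl(\omega_\alpha^{-2}(r,t)-\omega_\alpha^{-2}(r+1,t)\bigr)$ (increments are $\le 0$... so take $\omega_\alpha^{-2}(|z|,t)=\int_{|z|}^\infty (-\partial_r \omega_\alpha^{-2})(r,t)\,dr$) and exchanging sums to get $\sum_z \omega_\alpha^{-2}(z,t)\mathcal{Z}(\tau_z a) \lesssim \int_0^\infty \bigl|\partial_r\omega_\alpha^{-2}(r,t)\bigr|\,\bigl(\sum_{|z|\le r}\mathcal{Z}(\tau_z a)\bigr)\,dr \lesssim \bigl(\sup_{r}r^{-d}\sum_{|z|\le r}\mathcal{Z}(\tau_z a)\bigr)\int_0^\infty r^d\,\bigl|\partial_r\omega_\alpha^{-2}(r,t)\bigr|\,dr$, and the last integral equals a finite constant depending only on $d$ and $\alpha$ (by scaling $r=\lambda\rho$ the $\lambda$-powers cancel since $\int_0^\infty \rho^d (1+\rho^2)^{-\alpha-1}\rho\,d\rho<\infty$ for $2\alpha>d$).

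This bound is uniform in $t>0$, so taking the supremum over $t$ and then the $p$-th moment gives
\begin{align*}
\Bigl\langle \bigl|\sup_{t>0}\sum_{z} \omega_\alpha^{-2}(z,t)\mathcal{Z}(\tau_z a)\bigr|^p\Bigr\rangle^{1/p}
\le C_{\alpha,d}\,\Bigl\langle \bigl|\sup_{r\ge 1} r^{-d}\!\!\sum_{|z|\le r}\mathcal{Z}(\tau_z a)\bigr|^p\Bigr\rangle^{1/p}.
\end{align*}
The right-hand side is exactly the $L^p(\Omega)$ norm of a discrete Hardy--Littlewood maximal function of the stationary field $\bar{\mathcal Z}$, which by the maximal inequality \cite[Corollary~A.2]{MO} is controlled by $C(p)\,\langle|\mathcal{Z}|^p\rangle^{1/p}$ for every $p\in(1,\infty)$; for $p=1$ one uses a slightly larger exponent inside \eqref{MFb}, which is available by hypothesis. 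Combined with \eqref{MFb}, this proves \eqref{MFa}.

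The only genuinely delicate point is the first step — establishing the $t$-uniform pointwise (in $a$) domination of the weighted sum by the maximal function — and within it, making the Abel-summation/layer-cake bookkeeping rigorous in the discrete setting and checking that the resulting $r$-integral is scale-invariant and finite exactly when $\alpha>\tfrac d2$ (here $\alpha>\tfrac d2+1$ is more than enough, and in fact the extra "$+1$" gives room to spare). Everything after that is a direct appeal to the maximal inequality already cited in the paper, so I expect no further obstacle.
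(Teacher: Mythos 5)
There is a genuine error at the heart of your argument: the claimed $t$-uniform pointwise domination
\begin{equation*}
\sum_{z}\omega_\alpha^{-2}(z,t)\,\mathcal{Z}(\tau_z a)\ \le\ C_{\alpha,d}\,\sup_{r\ge 1}r^{-d}\!\!\sum_{|z|\le r}\mathcal{Z}(\tau_z a)
\end{equation*}
is false, as the test case $\mathcal{Z}\equiv 1$ shows (the left side is $\sim (1+t)^{d/2}\to\infty$, the right side is $O(1)$). The mistake is in the final scaling computation: with $\lambda^2=t+1$ one has $|\partial_r\omega_\alpha^{-2}(r,t)|=2\alpha\,\lambda^{-2}r\,(1+r^2/\lambda^2)^{-\alpha-1}$, so substituting $r=\lambda\rho$ gives
\begin{equation*}
\int_0^\infty r^d\,|\partial_r\omega_\alpha^{-2}(r,t)|\,\d r\ =\ 2\alpha\,\lambda^{d}\int_0^\infty \rho^{d+1}(1+\rho^2)^{-\alpha-1}\,\d\rho ,
\end{equation*}
i.e.\ the $\lambda$-powers do \emph{not} cancel; a factor $(1+t)^{d/2}$ survives. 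This is the same $\lambda^d$ you already ran into with the dyadic-annuli attempt; the Abel summation cannot remove it because the unnormalized statement it would prove is simply not true.

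That said, your method is essentially sound once this factor is kept: the layer-cake/Abel argument correctly yields $\sum_z\omega_\alpha^{-2}(z,t)\mathcal{Z}(\tau_z a)\lesssim (1+t)^{d/2}\,\sup_{r\ge1}r^{-d}\sum_{|z|\le r}\mathcal{Z}(\tau_z a)$ uniformly in $t$, and then the strong-type maximal inequality of \cite[Corollary~A.2]{MO} finishes the proof of the \emph{normalized} bound, with $\sup_{t>0}(1+t)^{-d/2}\sum_z\omega_\alpha^{-2}(z,t)\mathcal{Z}(\tau_z a)$ on the left. This normalized version is in fact what the paper proves (see the factor $t^{-d/2}$ in \eqref{MF1}) and what is used in \eqref{MF}/\eqref{LG2c}; the display \eqref{MFa} as printed omits the $(1+t)^{-d/2}$, which is a typo in the statement rather than something you should try to prove. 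Your corrected route (pointwise domination by the Hardy--Littlewood maximal function plus the strong $(p,p)$ bound) is arguably cleaner than the paper's, which establishes a weak $(1,1)$ estimate for the composite supremum by a union bound over annuli with weights $\lambda/n^2$ and then invokes Marcinkiewicz interpolation; but as written, your proposal asserts a false inequality supported by an incorrect cancellation, so it does not stand.
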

\begin{proof}[Proof of Lemma \ref{LG}]
We prove Lemma \ref{LG} similarly to \cite[Theorem~3]{GNO}. We remark that, in contrast with Theorem 3, we do not need to prove an optimal decay in time for the weighted $\ell^{2p}$-norm in space, and we replace inequality (173) by the
stochastic bound (\ref{G2}) of Lemma~\ref{MO}.\\
We start by observing that by \cite[Proposition~3.3 or 3.4]{MO},  Lemma \ref{MO} implies for $\alpha > \frac{d}{2}+1$
\begin{equation}\label{LG3}
 \sum_{x \in \Z^d} \omega_\alpha^2(x,t) \, p_t^2(x,0)  \lesssim \frac{\X}{t^{ \frac d 2}},
\end{equation}
where here and in the rest of this proof $\lesssim$ stands for $\leq C(d,\alpha)$.

\medskip

We start by upgrading inequality (\ref{LG3}) to the bound (\ref{LG2a}):
Since for every $t > 0$ and $s\in (0, t)$
\begin{align}\label{LG10b}
\omega_\alpha(t,x) \lesssim \omega_{\alpha}(s, x-z ) \omega_\alpha( t-s, z),
\end{align}
we may choose the value $s=\frac t 2$ in (\ref{LG10b}) and in (\ref{LG10a}) of Lemma \ref{MO} and obtain

\begin{align}\label{LG11}
\omega_\alpha(t,x)p_t(x,0) &\lesssim \sum_z \omega_\alpha( \frac t 2, z-x)p_{\frac t 2}( x, z) \omega_\alpha (\frac t 2, z) p_{\frac t 2}(z, 0)\notag\\
&\lesssim \Ll(\sum_z \omega_\alpha^2( \frac t 2, z-x)p_{\frac t 2}^2( x, z) \Rr)^{\frac 1 2}\Ll(\sum_z \omega_\alpha^2 (\frac t 2, z) p_{\frac t 2}^2( z, 0) \Rr)^{\frac 1 2}.
\end{align}
By symmetry  and stationarity of $p_t$, it holds 
$$
p_t(a,x,z)= p_t(a,z,x)=p_t(\tau_xa, z-x,0),
$$
so that inequality (\ref{LG11}) turns into
\begin{align*}
\omega_\alpha(x,t)p_t(x,0) &\lesssim \Ll(\sum_z \omega_\alpha^2( \frac t 2, z-x)p_{\frac t 2}^2(\tau_x a, z-x, 0) \Rr)^{\frac 1 2}\Ll(\sum_z \omega_\alpha^2 (\frac t 2, z) p_{\frac t 2}^2(a, z, 0) \Rr)^{\frac 1 2}\\
&\stackrel{(\ref{G2})}{\lesssim} \frac{\sqrt{\mathcal{X}(a)\mathcal{X}(\tau_x a)}}{t^{\frac d 2}}.
\end{align*}
Recalling our definition of stationary extension of a random variable, the previous inequality yields (\ref{LG2a}).

\medskip

In order to get also (\ref{LG2}), we first claim that for every $T >0$
\begin{equation}\label{LG7}
\fint_T^{2T}\sum_b \omega_\alpha^2(\underline b,t) |\sqrt{a(b)}\nabla p_t(b,0)|^2  \lesssim \frac{\X}{ T^{\frac d 2 +1}}.
\end{equation}
The identity
\begin{align*}
\frac{d}{dt}\biggl( \sum_x \frac{1}{2}p_t^2(x,0) \biggr) \stackrel{(\ref{E2})}{=} -\sum_b \nabla p_t(b,0)\cdot a(b) \nabla p_t(b,0)
\end{align*}
implies by integrating
\begin{align*}
\fint_T^{2T}\sum_b \nabla p_t\cdot a\nabla p_t(b,0) \lesssim T^{-1}\sum_x p_T^2(x,0)\stackrel{(\ref{G2})}{\lesssim} \frac{\mathcal{X}}{T^{\frac d 2 +1}}.
\end{align*}
Therefore, as 
\begin{align*}
{\omega}_{\alpha}^{2}(\underline b, t)\nabla p_t\cdot a \nabla p_t(b,0) \lesssim t^{-\alpha}|\underline b|^{2\alpha}\nabla p_t\cdot a \nabla p_t(b,0)+ \nabla p_t\cdot a \nabla p_t(b,0),
\end{align*}
in order to show (\ref{LG7}) it suffices to prove that
\begin{align}\label{L1.7}
\fint_T^{2T}\sum_b |\underline b|^2 \nabla p_t(b,0)\cdot a(b) \nabla p_t(b,0) \lesssim  \frac{\X}{T^{-\alpha+\frac d 2 +1}}.
\end{align}
We write
\begin{align*}
\frac{d}{dt}&\Ll(\sum_{x} |x|^{2\alpha}p_t^2(x, 0)\Rr)\stackrel{(\ref{E2})}{=} - \sum_b \nabla\Ll( |\cdot|^{2\alpha} p_t(\cdot,0)\Rr)(b) \cdot a(b) \nabla p_t(b,0)\\
\lesssim & - \sum_b |\underline b|^{2\alpha} \nabla p_t\cdot a\nabla p_t(b,0) + \sum_b  p_t(\overline b,0)\bigl(|\overline b|^{2\alpha}-|\underline b|^{2\alpha} \bigr) |a(b) \nabla p_t(b,0)|\\
 & \ls  - \sum_b |\underline b|^{2\alpha} \nabla p_t\cdot a\nabla p_t(b,0) + \sum_b  p_t(\overline b,0)|\overline b|^{\alpha-1}|\underline b|^{\alpha} |
a(b) \nabla p_t(b,0)|,
\end{align*}
and thus by H\"older's inequality and Young's inequality we get
\begin{align*}
\frac{d}{dt}\Ll(\sum_{x} |x|^{2\alpha}p_t^2(x, 0)\Rr)
&{\lesssim} - \sum_b |\underline b|^{2\alpha} \nabla p_t\cdot a \nabla p_t(b,0) + \sum_b  |\overline{b}|^{2(\alpha-1)} p^2_t(\overline{b},0)\\
&\simeq - \sum_b |\underline b|^{2\alpha} \nabla p_t\cdot a \nabla p_t(b,0) + \sum_x  |x|^{2(\alpha-1)} p^2_t(x,0) .
\end{align*}
Integrating this inequality in $t\in (T, 2T)$ we obtain
\begin{align*}
\fint_T^{2T}\sum_b |\underline b|^{2\alpha}& \nabla p_t \cdot a \nabla p_t(b,0)\\
 &\lesssim T^{-1}\biggl(\sum_{x} |x|^{2\alpha}p_T^2(x, 0)\biggr) + \fint_{T}^{2T}\sum_x  |x|^{2(\alpha-1)}p_t^2(x,0)
\end{align*}
and therefore 
\begin{align*}
\fint_T^{2T}\sum_b |\underline b|^{2\alpha}& \nabla p_t \cdot a \nabla p_t(b,0)\\
 &\lesssim T^{-1+ \alpha}\sum_{x} \omega_\alpha^{2}(T, x)p_T^2(x, 0) + \fint_{T}^{2T} t^{\alpha - 1}\sum_x  \omega_\alpha^{2}(t,x)p_t^2(x,0).
\end{align*}
Using (\ref{LG3}) we conclude (\ref{L1.7}).

\medskip 

\noindent We finally prove (\ref{LG2}): by (\ref{LG10a}) it holds for every $b\in \mathbb{B}$
\begin{align*}
\sqrt{a}\nabla p_T(b, 0)= \fint_{\frac T 3}^{\frac 2 3 T} \sum_z \sqrt{a}\nabla p_t( b, z) p_{T-t}( z, 0) \, dt,
\end{align*}
so that 
\begin{align}\label{LG10}
\nabla p_T \cdot a \nabla p_T(b,0)&=|\sqrt{a}\nabla p_T(b, 0)|^2 \notag \\
&= |\fint_{\frac T 3}^{\frac 2 3 T} \sum_z \sqrt{a}\nabla p_t(b, z) p_{T-t}( z, 0) |^2  \, dt \notag\\
& \le \fint_{\frac T 3}^{\frac 2 3 T} \sum_z p_{T-t}( z, 0) |\sqrt{a}\nabla p_t(b, z)|^2  \, dt,
\end{align}
where for the last line we appeal to Jensen's inequality in t and, thanks to $\sum_z p_{s}(z,0)=1$ for every $s > 0$, also in $z$ . By (\ref{LG10b}) we write

\begin{align*}
& \sum_b \omega_\alpha^2(\underline b,T)\nabla p_T\cdot a \nabla p_T( b,0)\\\
& \ \stackrel{(\ref{LG10b})}{\lesssim} \fint_{\frac T 3}^{\frac 2 3 T} \sum_z \omega_\alpha^2( T-t, z) p_{T-t}( z, 0) \sum_b \omega_{\alpha}^2(t, \underline b-z )|\sqrt{a}\nabla p_t(b, z)|^2  \, dt \\
& \stackrel{(\ref{LG2a}), \ \beta > d}{\lesssim} \fint_{\frac T 3}^{\frac 2 3 T} \sum_z \sqrt{\overline{\X}(0)\overline{\X}(z)}(T-t)^{-\frac d 2}\omega_\beta^{-2}( T-t, z) \sum_b \omega_{\alpha}^2(t,\underline b-z)|\sqrt{a}\nabla p_t(b, z)|^2 \, dt\\
&\quad \lesssim  T^{-\frac d 2} \sum_z \sqrt{\overline{\X}(0)\overline{\X}(z)}\omega_\beta^{-2}( T, z) \fint_{\frac T 3}^{\frac 2 3 T}\sum_b \omega_{\alpha}^2( t, \underline b-z )|\sqrt{a}\nabla p_t(b, z)|^2 \, dt\\
&\ \stackrel{(\ref{LG7})}{\lesssim}\frac{\X}{T^{d+1}}\sum_z \sqrt{\overline{\X}(0)\overline{\X}(z)}\omega_\beta^{-2}( T, z),
\end{align*}
which implies (\ref{LG2}) if we choose $\beta = \frac d 2 + 2$.

\medskip

To show inequality (\ref{LG2c}) we observe that thanks to (\ref{G0}) it is enough to prove that for every $p \in [1, +\infty)$
\begin{align}\label{MF}
\langle |\sup_{t> 0}{(1+ t)}^{-\frac d 2} \sum_{z} \omega_{\frac d 2 + 2}^{-2}(t, z) \sqrt{\overline{\X}(z)}|^p \rangle \leq C(d, p) < +\infty.
\end{align}
This immediately follows from Lemma \ref{MaxF} since $ \sqrt{\overline{\X}(z)}$ stands for $\sqrt{\overline{\X}(a,z)}= \sqrt\X(\tau_z a)$ and we have chosen $\alpha= \frac d 2 + 2 > \frac d 2 +1$.
\end{proof}
\section{Proof of Theorem \ref{t.main}}
\label{s.main}
Before giving the argument for Theorem \ref{t.main} we introduce two technical results.
The first is a generalization of Lemma 15 of \cite{GNO}.
\begin{lem}\label{ODE}
Assume that for $C_0 > 0$,
\begin{align}
&0 \le a(t)  \le C_0 \biggl( (1+t)^{{ -\gamma_0}} + \int_0^t (1+t-s)^{-\gamma}\, b^\beta(s) \, d s \biggr),\label{ODE1}\\
&0\le b^p(t)  \le -\frac{d}{dt} [a^p(t)] \label{ODE2},
\end{align}
with $p \in [2, +\infty)$, { $\gamma \in [1 , +\infty)$, $\gamma_0 \in (0, \gamma]$} and $\beta \in (\frac{\gamma}{\gamma+\frac 1 p} , 1)$. Then there exists a constant $C=C(\lambda, \beta, p, C_0) < +\infty$ such that
\begin{align}\label{ODE7}
 a(t) \leq C (1+t)^{{ - \gamma_0}}.
\end{align}
\end{lem}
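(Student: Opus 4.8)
The plan is to prove \eqref{ODE7} by a bootstrap on the decay rate, following the scheme of \cite[Lemma~15]{GNO}. Since $b^p \ge 0$, hypothesis \eqref{ODE2} forces $a^p$ to be non-increasing, whence $a(t) \le a(0) \le C_0$ for all $t$ (apply \eqref{ODE1} at $t=0$); this is the base case, with the trivial rate $\sigma = 0$. The engine of the argument is the dissipative estimate built into \eqref{ODE2}: integrating it over a dyadic block gives, for every $T \ge 0$,
\[
\int_T^{2T} b^p(s)\, ds \le a^p(T) - a^p(2T) \le a^p(T),
\]
and, more generally, the increments $a^p(t - 2^{j}) - a^p(t - 2^{j-1})$ bound $\int b^p$ over the corresponding pieces and telescope. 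Thus any bound $a(t) \le M(1+t)^{-\sigma}$ is automatically promoted to an $L^p$-in-time control of $b$ on each dyadic scale.

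Given such a bound, I would estimate the Duhamel term $\int_0^t (1+t-s)^{-\gamma} b^\beta(s)\, ds$ by splitting $[0,t]$ into the off-diagonal range $s \le t/2$, where $(1+t-s)^{-\gamma} \lesssim (1+t)^{-\gamma}$ and the dyadic block bounds sum up directly, and the near-diagonal range $s \in [t/2,t]$, estimated by H\"older's inequality with exponents $p/\beta$ and $p/(p-\beta)$ --- here the assumption $\gamma \ge 1$ is used, since it makes $\gamma\,\tfrac{p}{p-\beta} > 1$, so the convolution kernel lies in $L^{p/(p-\beta)}$ on a half-line. After carrying out the off-diagonal estimate and, for the near-diagonal part, the refined estimate described below, one obtains (up to logarithmic corrections and harmless lower-order terms) a bound $\lesssim M^\beta\,(1+t)^{-\min\{\gamma,\, \beta\sigma + \beta/p\}}$ for the Duhamel term; reinserting this into \eqref{ODE1} and using $\gamma_0 \le \gamma$ improves the rate from $\sigma$ to $\sigma' = \min\{\gamma_0,\, \beta\sigma + \beta/p\}$, while the prefactor undergoes $M \mapsto C + C M^\beta$.

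The quantitative hypotheses then close the induction. The affine map $\sigma \mapsto \beta\sigma + \beta/p$ has fixed point $\tfrac{\beta}{p(1-\beta)}$, and $\beta > \tfrac{\gamma}{\gamma + 1/p}$ is exactly the inequality $\tfrac{\beta}{p(1-\beta)} > \gamma \ge \gamma_0$; as $\beta < 1$, the iterates $\sigma_0 = 0$, $\sigma_{k+1} = \beta\sigma_k + \beta/p$ increase strictly to that fixed point and therefore exceed $\gamma_0$ after finitely many steps, after which the rate is capped at $\gamma_0$. The strict inequality also kills the borderline logarithms in the last step and permits absorbing the $\varepsilon$-loss, so that \eqref{ODE7} holds with the exact exponent $\gamma_0$. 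Because $\beta < 1$, the prefactor recursion $M \mapsto C + C M^\beta$ has bounded orbits, so the final constant depends only on $C_0$, $p$, $\beta$, $\gamma$, $\gamma_0$ (this is what $\lambda$ in the statement stands for).

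The main obstacle --- and the one place where \eqref{ODE2} has to be used beyond the crude block bounds --- is the near-diagonal term $\int_{t/2}^t (1+t-s)^{-\gamma} b^\beta(s)\, ds$. Plain H\"older only gives $C\,a(t/2)^\beta$, which falls short of what the bootstrap needs by a factor $(1+t)^{\beta/p}$ and, if used as is, stalls the iteration. Morally, $b$ carries half a unit (in $L^p$) more decay than $a$ because of the dissipation relation, and to capture this one keeps the incremental drops $a^p(t-2^{j}) - a^p(t-2^{j-1})$ explicit along a dyadic decomposition of $t - s$ and balances them against \eqref{ODE1} itself, so that an anomalously large near-diagonal contribution is compensated by a correspondingly large drop of $a^p$, hence by a smaller value of $a$ at the time in question. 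This balancing is the technical heart of the proof, is performed as in \cite[Lemma~15]{GNO}, and is exactly calibrated by the threshold $\beta > \gamma/(\gamma + 1/p)$; it is what yields the sharp rate $\gamma_0$ in place of the weaker $\beta\gamma_0$.
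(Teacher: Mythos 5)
Your global strategy differs from the paper's: you iterate on the decay exponent (the map $\sigma\mapsto\min\{\gamma_0,\beta\sigma+\beta/p\}$ together with the prefactor recursion $M\mapsto C+CM^\beta$), whereas the paper runs a one-shot absorption argument on the weighted supremum $\Lambda_0(t)=\sup_{s\le t}(1+s)^{\gamma_0}a(s)$, making the coefficient in front of $\Lambda^\beta(t)$ small by taking a cut-off time $\tau_0$ large and then absorbing into the left-hand side. Both routes rest on the same two ingredients --- the dyadic dissipation bound $\int_{t_1}^{t_2}b^\beta(s)\,ds\lesssim a^\beta(t_1)(t_2-t_1)^{1-\beta/p}$ obtained from \eqref{ODE2} via H\"older, and a splitting of the Duhamel integral --- and your fixed-point arithmetic ($\beta>\gamma/(\gamma+\tfrac1p)$ being equivalent to $\tfrac{\beta}{p(1-\beta)}>\gamma\ge\gamma_0$) is correct, so the iteration would close if every step were justified.

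The gap is in the near-diagonal term $\int_{t/2}^t(1+t-s)^{-\gamma}b^\beta(s)\,ds$, which you correctly identify as the crux but then describe only ``morally'' and defer to \cite{GNO}. Your proposed mechanism --- keeping the increments $a^p(t-2^{j})-a^p(t-2^{j-1})$ explicit and ``balancing'' an anomalously large near-diagonal contribution against a correspondingly large drop of $a^p$ --- is not an argument, and it is not what the cited lemma (or this paper) actually does. The device is elementary but different: since $a^p$ is non-increasing by \eqref{ODE2}, so is $a$, hence $a(t)\le\tfrac{2}{t}\int_{t/2}^ta(r)\,dr$; applying \eqref{ODE1} at each $r$ under this average and exchanging the order of integration replaces the singular kernel by $\tfrac{1}{t}\int_s^t(1+r-s)^{-\gamma}\,dr\lesssim\tfrac{\log(1+t)}{t}$ (this is where $\gamma\ge1$ enters), so the near-diagonal contribution is controlled by $\tfrac{\log(1+t)}{t}\int_{t/4}^tb^\beta(s)\,ds$, and the same dyadic block bound used off-diagonal then yields the full rate $\beta(\sigma+\tfrac1p)$ instead of the deficient $\beta\sigma$ produced by plain H\"older. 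Without this time-averaging step (or a genuine substitute for it), your iteration stalls at the very first step, exactly where you say it would; as written the proof is therefore incomplete at its self-declared heart.
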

\begin{proof} In this proof we use the notation $\lesssim$ for $\leq C(\lambda,\beta, p, C_0)$.
We define
\begin{align}\label{ODE4}
\Lambda(t) := \sup_{s \le t} (1+s)^{\gamma} a(s), \ \ \ { \Lambda_0(t) := \sup_{s \le t} (1+s)^{\gamma_0} a(s)}
\end{align}
By H\"older's inequality, for any $0 \le t_1 \le t_2$ it follows
\begin{align}\label{ODE3}
\int_{t_1}^{t_2} b^\beta(s) \, ds & \le \left( \int_{t_1}^{t_2} b^p(s) \, ds \right)^{\frac \beta p} (t_2 - t_1)^{1-\frac \beta p} &\stackrel{(\ref{ODE2})}{\le} \left( a^p(t_1) - a^p(t_2)\right)^{\frac \beta p} (t_2 - t_1)^{1-\frac \beta p}\notag\\
&\lesssim a^{\beta}(t_1) \ (t_2 - t_1)^{1-\frac \beta p},
\end{align}
where in the last inequality we use the fact that by (\ref{ODE2}), the function $a^p(t)$ is monotone non-increasing.
Moreover, letting $N\in \mathbb{N}$ be such that $2^{N-1}t_1\leq t_2 \le 2^{N}t_1$, we get
\begin{align*}
\int_{t_1}^{t_2} b^\beta(s) \, ds \le \sum_{n=1}^N \int_{2^{n-1}t_1}^{2^{n}t_1} b^\beta(s) \, ds \stackrel{(\ref{ODE3})}{\lesssim} 
\sum_{n=1}^N t_1^{1-\frac \beta p}2^{(n-1)(1-\frac \beta p)}\  a^{\beta}(2^{n-1}t_1),
\end{align*}
which by the definition of $\Lambda$ in (\ref{ODE4}) implies
\begin{align}\label{ODE5}
\int_{t_1}^{t_2} b^\beta(s) \, ds& \lesssim   t_1^{1-\frac \beta p} \Lambda^{\beta}(t_2)\sum_{n=1}^N 2^{(n-1)(1-\frac \beta p)}(2^{n-1}t_1)^{-\gamma\beta}\notag\\
 & \lesssim  t_1^{1-\beta (\frac 1 p+\gamma)} \Lambda^{\beta}(t_2)\sum_{n=1}^N 2^{(n-1)(1-\beta(\frac 1  p + \gamma))}\lesssim \Lambda^{\beta}(t_2)t_1^{1-\beta (\frac 1 p+\gamma)},
\end{align}
since we assume $\beta (\frac 1 p + \gamma) > \gamma \ge 1$.
Moreover, since by \eqref{ODE2} the function $a$ is non-increasing, we have
\begin{align*}
a(t) &\  \le \frac{2}{t}\int_{\frac{t}{2}}^t a(r) \, d r \\
&\stackrel{(\ref{ODE1})}{\lesssim} \frac{2}{t}\int_{\frac{t}{2}}^t(1+r)^{{ -\gamma_0}} \, d r+ \frac{2}{t}\int_{\frac{t}{2}}^t d r \int_0^r (1+r-s)^{-\gamma}\, b^\beta(s) \, d s\\
&\  \lesssim  (1+t)^{{ -\gamma_0}} + \frac{2}{t}\int_{\frac{t}{2}}^t d r \int_0^r (1+r-s)^{-\gamma}\, b^\beta(s) \, d s.
\end{align*}
We let $\tau \in [0,\frac t 4]$ to be chosen later, and write
\begin{align*}
a(t)\lesssim  (1+t)^{{ -\gamma_0}} &+ \frac{2}{t}\int_{\frac{t}{2}}^t d r \int_0^\tau (1+r-s)^{-\gamma}\, b^\beta(s) \, d s\\
&+\frac{2}{t}\int_{\frac{t}{2}}^t d r \int_\tau^{\frac r 2}(1+r-s)^{-\gamma}\, b^\beta(s) \, d s\\
&+\frac{2}{t}\int_{\frac{t}{2}}^t d r \int_{\frac r 2}^r (1+r-s)^{-\gamma}\, b^\beta(s) \, d s.
\end{align*}
We estimate each of the three last terms in turn:
\begin{align*}
\frac{2}{t}\int_{\frac{t}{2}}^t&  d r \int_0^\tau (1+r-s)^{-\gamma}\, b^\beta(s) \, d s \lesssim (1+t)^{-\gamma}\int_0^\tau b^\beta(s) \, d s\\
& \stackrel{(\ref{ODE3})}{\lesssim} (1+t)^{-\gamma} \tau^{1-\frac \beta p} \  a^{\beta}(0) \lesssim (1+t)^{-\gamma} \tau^{1-\frac \beta p};
\end{align*}
\begin{align*}
\frac{2}{t}\int_{\frac{t}{2}}^t&  d r \int_\tau^\frac{r}{2} (1+r-s)^{-\gamma}\, b^\beta(s) \, d s\\
&\lesssim (1+t)^{-\gamma} \int_\tau^\frac{t}{2} b^\beta(s) \, d s \stackrel{(\ref{ODE5})}{\lesssim} (1+t)^{-\gamma} \frac{\Lambda^\beta(t)}{\tau^{\beta\left( \frac 1 p + \gamma \right) - 1}};
\end{align*}
\begin{align*}
\frac{2}{t}\int_{\frac{t}{2}}^t& d r \int_\frac{r}{2}^r (1+r-s)^{-\gamma}\, b^\beta(s) \, d s\\
& \lesssim \frac{2}{t}  \int_{\frac{t}{4}}^t b^\beta(s) \, d s\int_s^t (1+r-s)^{-\gamma} \, d r\\
&\stackrel{\gamma \geq 1}{\lesssim} \frac{2}{t}\int_{\frac{t}{4}}^t b^\beta(s) \, d s \int_0^{t-s} (1+r)^{-1}\, d r \,\\
&\lesssim \biggl(\int_{\frac{t}{4}}^t b^\beta(s) \, d s \biggr) \biggl( \frac{2}{t}\int_0^t (1+r)^{-1}\, d r \biggr)\\
&\lesssim  \frac{\log(t+1)}{t} \int_{\frac{t}{4}}^t \, b^\beta(s) \, d s \stackrel{(\ref{ODE5})}{\lesssim} \frac {\log(t+1)} {t} \frac{\Lambda^\beta(t)}{(1+t)^{\beta\left( \frac 1 p + \gamma \right) -1}}\\
&\lesssim \frac{\log(1+ t)}{(t+1)^{\beta(\frac 1 p + \gamma) - \gamma} }\frac{\Lambda^\beta(t)}{(1+t)^{\gamma}}.
\end{align*}
Everything together implies
\begin{align*}
(1+t)^{\gamma}a(t) \lesssim (1 + t)^{{  \gamma_0 - \gamma}}+\tau^{1-\frac \beta p} + \Ll(\frac{1}{\tau^{\beta\left( \frac 1 p + \gamma \right) - 1}} + \frac{\log(t+1)}{(t+1)^{\beta(\frac 1 p + \gamma) - \gamma}}\Rr)\Lambda^\beta(t),
\end{align*}
where the implicit constant does not depend on our choice of $\tau \in [0,\frac t 4]$.
It follows from our assumption on $\beta$ and $\gamma$ that we can find $\tau_0 = \tau_0(\beta, \gamma, p) < \infty$ and $C(\beta,\gamma,p) < \infty$ such that for $\frac t 4 \geq \tau= \tau_0$,{ 
\begin{align*}
(1+t)^{\gamma_0}a(t) &\le C \Ll(1 + \tau_0^{1-\frac \beta p}(1+ t)^{\gamma_0-\gamma}\Rr) + (1+ t)^{\gamma_0-\gamma}\frac{1}{2}\Lambda^\beta(t).
\end{align*}
Moreover, by $\gamma_0 \leq \gamma$ and $\beta < 1$ it holds
\begin{align}\label{ODE6} 
(1+t)^{\gamma_0}a(t) &\le C \Ll(1 + \tau_0^{1-\frac \beta p}\Rr) + \frac{1}{2}\Lambda_0(t).
\end{align}
}
Hence, for such $t$, we have
\begin{align*}
&\Lambda_0(t) \le \sup_{0\leq s \le \tau_0}(1+s)^{\gamma_0}a(s) + \sup_{\tau_0 \le s \le t}(1+s)^{\gamma_0}a(s)\\
&\stackrel{(\ref{ODE2}), (\ref{ODE6}), (\ref{ODE1})}{\le}  (1+\tau_0)^{\gamma_0} + C \Ll(1 + \tau_0^{1-\frac \beta p}\Rr) + \frac{1}{2}\Lambda_0(t),
\end{align*}
and this completes the proof.
\end{proof}
\begin{lem}\label{Mono}
Let $u_t$ solve (\ref{E1}). Then, for every integer $p \geq 1$, we have
\begin{align}\label{M1}
 \langle |Du_t \cdot a Du_t|^p \rangle \leq C \biggl( -\frac{d}{dt} \langle u^{2p}(t) \rangle \biggr),
\end{align}
where $C= C(d, p) < +\infty$.
\end{lem}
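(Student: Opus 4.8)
The plan is to compute the time derivative of $\langle u_t^{2p}\rangle$ directly using the equation \eqref{E1} and a discrete integration by parts on $\Omega$ (i.e., using the adjoint relation between $D$ and $D^*$), and then to bound the resulting Dirichlet-type form from below by $\langle |Du_t\cdot aDu_t|^p\rangle$ using a discrete chain rule together with H\"older's inequality. First I would write
$$
-\frac{d}{dt}\langle u_t^{2p}\rangle = 2p\,\langle u_t^{2p-1}\,D^*aDu_t\rangle = 2p\,\langle D(u_t^{2p-1})\cdot aDu_t\rangle,
$$
using $\langle\cdot\rangle$-stationarity to move $D^*$ onto $u_t^{2p-1}$ (this is the analogue of integration by parts, valid since $D^*$ is the $L^2(\Omega)$-adjoint of $D$). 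The key algebraic input is then the elementary pointwise inequality, for each edge $e_i$ and the two environments $\tau_{\underline e_i}a$, $\tau_{\overline e_i}a$ appearing in $D_{e_i}$, relating the increment of $s\mapsto s^{2p-1}$ to the increment of $s\mapsto s^p$ and of $s\mapsto s$: something like $(D_{e_i}(u^{2p-1}))\,(D_{e_i}u) \gtrsim (D_{e_i}(u^p))^2$, which follows from the fact that for reals $x,y$ one has $(x^{2p-1}-y^{2p-1})(x-y)\ge c_p (x^p-y^p)^2$ (both sides vanish to the same order and have the same sign; this is a convexity/monotonicity fact). Multiplying by $a(e_i)\ge 0$ and summing over $i$ gives $D(u_t^{2p-1})\cdot aDu_t \gtrsim D(u_t^p)\cdot aD(u_t^p)$ pointwise in $\Omega$.

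Next I would relate $\langle D(u_t^p)\cdot aD(u_t^p)\rangle$ to $\langle (Du_t\cdot aDu_t)^p\rangle$. Here the natural move is again a pointwise inequality on each edge, bounding $a(e_i)|D_{e_i}(u^p)|^2$ in terms of $a(e_i)|D_{e_i}u|^2$ times a factor like $(\max(u(\tau_{\overline e_i}a),u(\tau_{\underline e_i}a)))^{2(p-1)}$, i.e. a factor bounded by $\|u_t\|_{L^\infty}^{2(p-1)}$ pointwise; combined with $\|u_t\|_{L^\infty(\Omega)}\le\|g\|_{L^\infty(\Omega)}$ from the maximum principle this would bound things, but that introduces a $\|g\|_\infty$ that is not present in \eqref{M1}. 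Instead, the cleaner route matching the statement is to apply H\"older's inequality in the summation over the $d$ directions: writing $D(u_t^p)\cdot aD(u_t^p)=\sum_i a(e_i)|D_{e_i}(u_t^p)|^2$ and, after the pointwise step, $D(u_t^{2p-1})\cdot aD(u_t^p)\cdots$ — more to the point, since the $(u)^{2(p-1)}$ weight is itself controlled by $|u_t(\tau_{\overline e_i}a)|^{2(p-1)}+|u_t(\tau_{\underline e_i}a)|^{2(p-1)}$, one uses H\"older with exponents $(p,p/(p-1))$ in $\langle\cdot\rangle$ to pull out $\langle|u_t|^{2p}\rangle^{(p-1)/p}$ and is left with $\langle(Du_t\cdot aDu_t)^p\rangle^{1/p}$, so
$$
-\frac{d}{dt}\langle u_t^{2p}\rangle \gtrsim \langle |Du_t\cdot aDu_t|^p\rangle^{1/p}\,\langle|u_t|^{2p}\rangle^{(p-1)/p}.
$$
Actually, reading \eqref{M1} as stated, the correct reading must be that the exponents balance exactly so that $-\frac{d}{dt}\langle u_t^{2p}\rangle$ controls $\langle|Du_t\cdot aDu_t|^p\rangle$ directly; I would therefore organize the H\"older step so that the weight $|u_t|^{2(p-1)}$ is absorbed against the $(2p-1)$st power already present, i.e. from $\langle u_t^{2p-1}D^*aDu_t\rangle$ one bounds $|D_{e_i}(u_t^{2p-1})|\lesssim (|u_t|^{2(p-1)}(\tau_{\overline e_i}a)+|u_t|^{2(p-1)}(\tau_{\underline e_i}a))|D_{e_i}u_t|$, so that $a(e_i)u_t^{2p-1}$-increment products are bounded by $|u_t|^{2(p-1)}\cdot a(e_i)|D_{e_i}u_t|^2$-type terms, and then one must still go from $a(e_i)|D_{e_i}u_t|^2$ to $|Du_t\cdot aDu_t|^p$, which is where the pointwise lower bound $D(u_t^{2p-1})\cdot aDu_t\gtrsim (Du_t\cdot aDu_t)^p / \|\cdot\|^{\ldots}$ — no; the honest statement is that by the convexity inequality above, $D(u_t^{2p-1})\cdot aDu_t \ge c_p\, (Du_t\cdot aDu_t)\cdot\min_i(\text{weights})$ is false in general, so the true mechanism is: each direction contributes $a(e_i)(D_{e_i}u_t)^2$ with a bounded nonnegative weight, and $(\sum_i a(e_i)(D_{e_i}u_t)^2)^p\le d^{p-1}\sum_i a(e_i)^p(D_{e_i}u_t)^{2p}\le d^{p-1}\sum_i a(e_i)(D_{e_i}u_t)^{2p}$ since $a(e_i)\le 1$, so it suffices to dominate $\langle a(e_i)(D_{e_i}u_t)^{2p}\rangle$ by $-\frac{d}{dt}\langle u_t^{2p}\rangle$, which follows by choosing $p'=1$ in the chain-rule inequality, i.e. bounding $(D_{e_i}u_t)^{2p}$ by a constant times $(D_{e_i}(u_t^{2p-1}))(D_{e_i}u_t)$ up to the same convexity fact with $x^{2p-1}$ and the linear function.

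The main obstacle, then, is the elementary but slightly fiddly real-variable inequality that makes the discrete chain rule work: showing that for all real $x,y$ and integer $p\ge1$,
$$
(x^{2p-1}-y^{2p-1})(x-y)\ \ge\ c_p\,(x-y)^{2p},
$$
with $c_p>0$ depending only on $p$ (equivalently $(x^{2p-1}-y^{2p-1})/(x-y)\ge c_p(x-y)^{2p-2}$ when $x\neq y$). This is where I would spend care: it can be proved by factoring $x^{2p-1}-y^{2p-1}=(x-y)\sum_{j=0}^{2p-2}x^{2p-2-j}y^j$ and showing the symmetric polynomial $\sum_j x^{2p-2-j}y^j$ dominates $c_p(x-y)^{2p-2}$; by homogeneity this reduces to a one-variable statement on $t=x/y$. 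Given that inequality (and the trivial facts $a(e_i)\in[0,1]$, $\sum$ over $i\le d$ directions, and $\langle D^*(\cdot)\,\psi\rangle=\langle(\cdot)\,D\psi\rangle$), the estimate \eqref{M1} follows by combining the identity for $-\frac{d}{dt}\langle u_t^{2p}\rangle$, the pointwise chain-rule lower bound, and the discrete power-mean inequality $(\sum_i v_i)^p\le d^{p-1}\sum_i v_i^p$ together with $a(e_i)^p\le a(e_i)$.
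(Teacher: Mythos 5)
Your final argument is correct and is essentially the paper's proof: differentiate $\langle u_t^{2p}\rangle$, use the adjointness of $D$ and $D^*$ to get the Dirichlet form $2p\langle Du_t^{2p-1}\cdot aDu_t\rangle$, reduce $\langle|Du_t\cdot aDu_t|^p\rangle$ to a sum over directions via the power-mean inequality and $a(e_i)\le 1$, and conclude with the pointwise convexity inequality $(x^{2p-1}-y^{2p-1})(x-y)\ge c_p(x-y)^{2p}$ (which the paper imports from \cite{GNO}, Lemma 14, and which your homogeneity argument establishes correctly). The exploratory detours in your write-up (H\"older steps introducing $\|g\|_{L^\infty}$, etc.) are correctly discarded and do not affect the validity of the route you settle on.
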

\begin{proof} 
We estimate
\begin{align*}
\frac{d}{dt} \langle u^{2p}(t) \rangle = - 2p\langle Du^{2p-1}\cdot a Du \rangle
\end{align*}
and we are done once that we prove
\begin{align}\label{M2}
\langle Du^{2p-1}\cdot a Du \rangle \gtrsim \langle |Du\cdot a Du|^p \rangle,
\end{align}
where here and in the rest of this proof we write $\lesssim$ and $\gtrsim$ respectively for $\leq C$ and $\geq C$ with $C$ depending on $d$ and $p$.
We note that the previous inequality can be rewritten as
\begin{align*}
 \sum_{i=1}^d \langle D_iu^{2p-1}\cdot a(e_i) D_iu \rangle \gtrsim \langle |\sum_{i=1}^d D_iu\cdot a(e_i) D_iu|^p \rangle 
\end{align*}
and, since the argument $D_i u \cdot a(e_i) D_i u \geq 0$, it is implied by
\begin{align*}
 \sum_{i=1}^d \langle D_iu^{2p-1}\cdot a(e_i) D_iu \rangle \gtrsim \sum_{i=1}^d \langle | D_iu\cdot a(e_i) D_iu|^p \rangle
\end{align*}
We thus reduce ourselves to prove for every fixed $i= 1, ... , d$
\begin{align*}
 D_iu^{2p-1}\cdot a(e_i) D_iu \gtrsim  | D_iu \cdot a(e_i) D_iu|^p.
\end{align*}
By our assumptions on the coefficients $0 \leq a(e_i) \leq 1$, we observe that if $a(e_i)=0$, then the previous inequality is trivial. If otherwise $a(e_i)\neq 0$, for every $a\in \R$ we have that
\begin{align*}
 (a^{2p-1}-1)a(e_i)(a-1) \stackrel{a(e_i) \leq 1}{\gtrsim} (a^{2p-1}-1)a(e_i)^{2p}(a-1),
\end{align*}
and thus (\ref{M2}) is implied if we show that
\begin{align}\label{M3}
(a^{2p-1}-1)(a-1) \gtrsim (a-1)^{2p},
\end{align}
for every $a\in \mathbb{R}$. We may now argue analogously to \cite{GNO}, Lemma 14, inequality (94) and show (\ref{M3}). 

\end{proof}

\begin{proof}[Proof of Theorem \ref{t.main}]
Throughout this proof, the notation $\lesssim$ stands for $\leq C$ with the constant depending on $d$ and $p$. 
We give ourselves an independent copy $(\td a(e))_{e \in \B}$ of the environment $(a(e))_{e \in \B}$, with the same law and defined on the same probability space. For each given $e \in \B$, we define the environment $a^e$ by
\begin{equation}
\label{e.def.ae}
a^e(b) := 
\begin{cases}
a(b) & \text{if } b \neq e, \\
\td a(e) & \text{if } b = e.
\end{cases}
\end{equation}
In other words, the environment $a^e$ is obtained from the environment $a$ by resampling the conductance at the edge $e$. 

By the independence assumption on the conductances~$(a(e))$, every random variable $f\in L^2(\Omega)$ satisfies the Spectral Gap (or Efron-Stein) inequality
\begin{align}\label{SG}
\Ll\langle \Ll(f - \langle f \rangle \Rr)^2 \Rr\rangle \le \frac 1 2 \sum_{e \in \B} \Ll\langle  \Ll(\partial_e f \Rr)^2 \Rr\rangle,
\end{align}
with respect to the Glauber derivative
$$
\partial_ef(a):= f(a^e)-f(a).
$$
We start by observing that, analogously to Lemma 11 of \cite{GNO}, we can upgrade the spectral gap inequality (\ref{SG}) to
\begin{equation}\label{pSG}
\Ll\langle \Ll|f- \Ll\langle f \Rr\rangle \Rr|^{2p}\Rr\rangle^{\frac 1 p} \lesssim \left\langle \biggl(\sum_{e} \bigl( {\partial_{e}}f\bigr)^{2}\biggr)^{p}\right\rangle^{\frac{1}{p}},\ \ \ \mbox{for integer $p \in [1, +\infty)$.}
\end{equation}

\medskip

Whenever no ambiguity occurs, we write $u_t(x):=\bar u_t(a, x)$ and/or skip the argument $a$ in $u$ and all the random variables involved. We now argue that, appealing to (\ref{pSG}), for $u_t$ solution of (\ref{E1}) it holds for every integer $p \in [1, +\infty)$ that
\begin{align}\label{RF}
 \langle |u_t|^{2p}\rangle^{{ \frac{1}{2p}}} \lesssim \langle \biggl(\sum_{e} \bigl( \sum_{z} p_t(z,0)&\partial_{e}\overline{g}(z) \bigr)^{2}\biggr)^{p}\rangle^{{ \frac{1}{2p}}}\notag\\
 + &\int_0^t \langle\biggl(\sum_b |\nabla p_{t-s}(b,0)|^2| \nabla \bar u_s(a^{e_i},b)|^2 \biggr)^{p}\rangle^{{ \frac{1}{2p}}},
\end{align}
with $\bar u_t$, and $\bar g$ the stationary extensions solving (\ref{E2}).\\
To show the previous bound we take the Glauber derivative $\partial_{e}$ in (\ref{E2}) and, thanks to the relation
$$
[\partial_e,\nabla] := \partial_e\nabla - \nabla\partial_e  = 0,
$$
we obtain the parabolic boundary value problem
\begin{equation}\label{T1}
\begin{cases}
 & \partial_t\partial_{e}\bar u_t+ \nabla^*a \nabla\partial_{e}{\bar u}_t=-\nabla^*h_t \\
 & \partial_{e} u_0 = \partial_{e}\bar g ,
\end{cases}
\end{equation}
where $h_t= h_t(a, b,e)$ is defined as
\begin{align}\label{T2}
h_t(a, b,e)&:=\partial_{e}a(b) \nabla u_t( a^{e},b) =  \1_{b=e} \partial_e a(e) \nabla \bar u_t(a^{e},e).
\end{align}
Using Duhamel's formula, equation (\ref{T1}) yields
\begin{align*}
\partial_{e}\overline{u}_t(x)=&\sum_{z} p_t(z,x)\partial_{e}\overline{g}(z)+\int_0^t \biggl( \sum_b \nabla p_{t-s}(b,x) h_s(b,e)\biggr)ds,
\end{align*}
which can be rewritten thanks to (\ref{T2}) as
\begin{align}\label{T4}
\partial_{e}\overline{u}_t(x)=&\sum_{z} p_t(z,x)\partial_{e}\overline{g}(z)+\int_0^t \biggl( \nabla p_{t-s}(e,x) \partial_e a(e) \nabla \bar u_s(a^{e},e)\biggr)ds.
\end{align}
Furthermore, we note that for a general random variable $\zeta=\zeta(a)$ it holds
\begin{align}\label{T9}
\partial_{e}\bar{\zeta}(a,x)&= \zeta( (\tau_x a)^{e})-\zeta(\tau_x a)\notag\\
&=\zeta(\tau_x(a^{e-x})) - \zeta(\tau_x a)= \overline{\partial_{e-x}\zeta}(a,x) ,
\end{align}
so that it follows for $u_t$ that 
\begin{align*}
\partial_{e}\overline{u}_t(a,0) = \overline{\partial_{e}u}_t(a,0).
\end{align*}
Taking in the previous identity the $2p$-th power and the average, stationarity implies that
\begin{align}\label{T5}
\langle |\partial_{e}\overline{u}_t(0)|^{2p} \rangle = \langle |\partial_{e}u_t|^{2p} \rangle.
\end{align}
Since by (\ref{E1}) for both assumptions (a) and (b) on the initial data we have $\langle u_t \rangle =0$, we may plug the identities (\ref{T5}) and (\ref{T4}) into the p-Spectral Gap (\ref{pSG}) and get
\begin{align}\label{T6}
 \langle |u_t|^{2p}\rangle^{{ \frac{1}{2p}}}&\lesssim \langle \biggl(\sum_{e} \bigl( {\partial_{e}}\overline{u}_t(0)\bigr)^{2}\biggr)^{p}\rangle^{{ \frac{1}{2p}}}\notag \\
 &\lesssim \langle \biggl(\sum_{e} \bigl( \sum_{z} p_t(z,0)\partial_{e}\overline{g}(z) \bigr)^{2}\biggr)^{p}\rangle^{{ \frac{1}{2p}}}\notag\\
& \hspace{3cm}+ \langle \biggl(\sum_{e} \bigl( \int_0^t  \nabla p_{t-s}(e,0) \partial_e a(e) \nabla \bar u_t(a^{e},e)ds \bigr)^{2}\biggr)^{p}\rangle^{{ \frac{1}{2p}}}.
\end{align}
We now apply the triangle inequality together with (\ref{T2}) on the second term on the r.h.s and estimate
\begin{align*}
\langle \biggl(\sum_{e} \bigl( \int_0^t  \nabla p_{t-s}(e,0) \partial_e a(e) &\nabla \bar u_t(a^{e},e)ds \bigr)^{2}\biggr)^{p}\rangle^{{ \frac{1}{2p}}}\\
&\lesssim \int_0^t \langle \biggl(\sum_e |\nabla p_{t-s}(e,0)|^2| \nabla \bar u_s(a^{e},e)|^2 \biggr)^{p}\rangle^{{ \frac{1}{2p}}}\, ds,
\end{align*}
i.e. inequality (\ref{RF}).

\medskip 

\noindent We are now ready to prove part (a): We start focussing on the first term on the r.h.s. of (\ref{RF}) and claim that
\begin{align}\label{T16}
\langle \biggl(\sum_{e} \bigl( \sum_{z} p_t(z,0)\partial_{e}\overline{g}(z) \bigr)^{2}\biggr)^{p}\rangle^{\frac{1}{p}}\lesssim t^{-\frac d 2} N^{  2} ||g||^2_{L^{\infty}(\Omega)}.
\end{align}
We rewrite the l.h.s of (\ref{T16}) as
\begin{align*}
\langle \biggl(\sum_{y\in \Z^d \atop i=1,... , d} \bigl( \sum_{z} p_t(z,0)&\partial_{\{y, y+e_i \}}\overline{g}(z) \bigr)^{2}\biggr)^{p}\rangle^{\frac{1}{p}}\\
\lesssim  \sum_{i=1,... , d}\langle &\biggl(\sum_{y} \bigl( \sum_{z} p_t(z,0)\partial_{\{y, y+e_i \}}\overline{g}(z) \bigr)^{2}\biggr)^{p}\rangle^{\frac{1}{p}}.
\end{align*}
For every fixed $i$, we estimate by the triangle inequality
\begin{align}\label{T15}
\langle \biggl(\sum_{y} \bigl( \sum_{z} p_t(z,0)&\partial_{\{y, y+e_i\}}\overline{g}(z) \bigr)^{2}\biggr)^{p}\rangle^{{ \frac{1}{2p}}}\le \sum_{z} \langle \biggl( \sum_{y} p_t^2(z,0)|\partial_{\{y, y+e_i\}}\overline{g}(z)|^2 \biggr)^{p}\rangle^{{ \frac{1}{2p}}} \notag\\
&\stackrel{(\ref{T9})}{=}\sum_{z} \langle \biggl(\sum_{y}   p_t^2(z,0)|\overline{\partial_{\{y-z, y-z+e_i\}}g}(z)|^2 \biggr)^{p}\rangle^{{ \frac{1}{2p}}}\notag\\
&\stackrel{z=y-z}{=}\sum_{z} \langle \biggl(\sum_{y} p_t^2(y-z,0)|\overline{\partial_{\{z,z+e_i\}}g}(y-z)|^2 \biggr)^{p}\rangle^{{ \frac{1}{2p}}}\notag\\
&\stackrel{y=y-z}{=}\sum_{z} \langle \biggl(\sum_{y} p_t^2(y,0)|\overline{\partial_{\{z,z+e_i\}}g}(y)|^2 \biggr)^{p}\rangle^{{ \frac{1}{2p}}}\notag\\
&\stackrel{(\ref{LG2a})}{\lesssim}\sum_{z} \langle \biggl(\sum_{y} {\overline{\X}(0)\overline{\X}(y)}t^{-d} \omega_{\alpha}^{-2}(t,y) |\overline{\partial_{\{z,z+e_i\}}g}(y)|^2 \biggr)^{p}\rangle^{{ \frac{1}{2p}}}.
\end{align}
We now use in the last term the triangle inequality in the inner sum and $\langle \cdot \rangle$ and infer that
\begin{align*}
\langle \biggl(\sum_{y} \bigl( \sum_{z} p_t(z,0)\partial_{\{y, y+e_i\}}&\overline{g}(z) \bigr)^{2}\biggr)^{p}\rangle^{{ \frac{1}{2p}}}\\
&\lesssim t^{-\frac{d}{ 2}} \sum_{z} \biggl(\sum_{y}\omega_{\alpha}^{-2}(t,y) \langle \bigl(\overline{\X}(0)\overline{\X}(y) \bigr)^{p}|\overline{\partial_{\{z,z+e_i\}}g}(y)|^{2p} \rangle^{\frac{1}{p}}\biggr)^{{ \frac 1 2}}.
\end{align*}
After a repeated application of H\"older's inequality in $\langle \cdot \rangle$, stationarity and (\ref{G0}) yield
\begin{align*}
\langle \biggl(\sum_{y} \bigl( \sum_{z} p_t(z,0)\partial_{\{y, y+e_i\}}\overline{g}(z)& \bigr)^{2}\biggr)^{p}\rangle^{{ \frac{1}{2p}}}\\
&\lesssim t^{-\frac{d}{ 2}} \sum_{z}|| \partial_{\{z,z+e_i\}}g||_{L^{\infty}(\Omega)} \biggl(\sum_{y} \omega_{\alpha}^{-2}(t,y)\biggr)^{ \frac 1 2}\\
&\stackrel{\alpha > \frac d 2}{\lesssim} t^{-\frac {d}{  4}}  \sum_{z}|| \partial_{\{z,z+e_i\}}g||_{L^{\infty}(\Omega)}.
\end{align*}
Appealing to our assumption on $g$ to be in $L^\infty(\Omega)$ and depending on $N$ edges, we get that
\begin{align*}
\langle \biggl(\sum_{y} \bigl( \sum_{z} p_t(z,0)\partial_{\{y, y+e_i\}}\overline{g}(z)& \bigr)^{2}\biggr)^{p}\rangle^{\frac{1}{p}} \lesssim N^2 ||g||^2_{L^{\infty}(\Omega)} t^{-\frac d 2} .
\end{align*}
Summing over $i=1, ..., d$ yields (\ref{T16}). 

\medskip

We now turn to the second term of the r.h.s. of (\ref{RF}) to argue that for every $\delta> 0$ it holds
\begin{align}\label{T17}
\int_0^t \langle\biggl(\sum_e |\nabla p_{t-s}(e,0)|^2|& \nabla \bar u_s(a^{e},e)|^2 \biggr)^{p}\rangle^{{ \frac{1}{2p}}}\, ds\notag\\
\lesssim &C(\delta) \int_0^t (t-s)^{-(\frac d { 4} +{ \frac 1 2})(1 -\frac 1 p)+ \frac{ d}{{ 2} p}} \langle |D u_t \cdot a D u_t|^{p(1+\delta)} \rangle^{\frac{1}{{ 2}p(1+\delta)}} \, ds .
\end{align}
By \eqref{W1b} of Lemma \ref{w-mod}, we write
\begin{align*}
&\int_0^t \langle\biggl(\sum_e |\nabla p_{t-s}(e,0)|^2| \nabla \bar u_s(a^{e},e)|^2 \biggr)^{p}\rangle^{{ \frac{1}{2p}}}\, ds \\
&\lesssim \int_0^t \langle\biggl(\sum_e w(e)^{-1}\sum_{b\in \pi(e)}|\sqrt{a(b)}\nabla p_{t-s}(b,0)|^2w'(e)^{-1}\sum_{b'\in \pi'(e)}|\sqrt{a^{e}(b')}\nabla \bar u_s(a^{e},b')|^2 \biggr)^{p}\rangle^{{ \frac{1}{2p}}}\, ds\\
&\simeq\int_0^t \langle\biggl(\sum_b |\sqrt{a(b)}\nabla p_{t-s}(b,0)|^2\sum_{e\in \pi^{-1}(b)}w(e)^{-1}w'(e)^{-1}\sum_{b'\in \pi'(e)}|\sqrt{a^{e}(b')}\nabla \bar u_s(a^{e},b')|^2 \biggr)^{p}\rangle^{{ \frac{1}{2p}}}\, ds,
\end{align*} 
where $\pi'(e)$ and $w'(e)$ are as $\pi$ and $w$ introduced in Lemma \ref{w-mod} and related to the environment given by $a^e$. 
After smuggling the weight $\omega_\alpha^2(t-s,\underline b)$ in the sum over $b$, we use H\"older's inequality with exponents $q$ and $p$ to estimate
\begin{align*}
&\int_0^t \langle\biggl(\sum_e |\nabla p_{t-s}(e,0)|^2| \nabla \bar u_s(a^{e},e)|^2 \biggr)^{p}\rangle^{{ \frac{1}{2p}}}\, ds \\
&\lesssim \int_0^t \langle\biggl(\sum_b \omega_{\alpha}^{2q}(t-s,\underline b)|\sqrt{{a}(b)}\nabla p_{t-s}(b,0)|^{2q} \biggl)^{p-1} \\
&\hspace{0.1cm} \times \biggl(\sum_b \omega_{\alpha}^{-2p}(t-s,\underline b)\bigl(\sum_{e\in \pi^{-1}(b)}w(e)^{-1}w'(e)^{-1}\sum_{b'\in \pi'(z)}|\sqrt{a^{e}(b')}\nabla \bar u_s(a^{e},b')|^2 \bigr)^p\biggr)\rangle^{{ \frac{1}{2p}}}\, ds.
\end{align*} 
We now appeal to Lemma \ref{LG} and the embedding $\ell^2 \subset \ell^{r}$ for $r \geq 2$ to infer from the previous inequality that
\begin{align}\label{T23}
&\int_0^t \langle\biggl(\sum_e |\nabla p_{t-s}(e,0)|^2| \nabla \bar u_s(a^{e},e)|^2 \biggr)^{p}\rangle^{{ \frac{1}{2p}}}\, ds\notag \\
&\lesssim \int_0^t (t-s)^{-(\frac d { 4} +{ \frac 1 2})(1-\frac 1 p)}\biggl( \sum_b \omega_{\alpha}^{-2p}(t-s ,\underline b) \notag\\
&\hspace{0.5cm}\times \langle\mathcal{Y}_{t-s}^{p-1}\bigl(\sum_{e\in \pi^{-1}(b)}w(e)^{-1}w'(e)^{-1}\sum_{b'\in \pi'(z)}|\sqrt{a^{e}(b')}\nabla \bar u_s(a^{e},b')|^2 \bigr)^p\rangle \biggr)^{{ \frac{1}{2p}}}\, ds.
\end{align} 
We claim that
\begin{align}\label{T19}
\langle \mathcal{Y}_{t-s}^{p-1}\bigl(\sum_{e\in \pi^{-1}(b)}w(e)^{-1}w'(e)^{-1}\sum_{b'\in \pi'(e)}|\sqrt{a^e(b')}&\nabla \bar u_s(a^{e},b')|^2 \bigr)^p\rangle\notag\\
&\lesssim C(\delta) \langle | Du_s \cdot a Du_s|^{p(1+\delta)} \rangle^{\frac{1}{(1+\delta)}}.
\end{align}
We note that, once that we have (\ref{T19}), by inserting it in (\ref{T23}) we get
\begin{align*}
&\int_0^t \langle\biggl(\sum_e |\nabla p_{t-s}(e,0)|^2| \nabla \bar u_s(a^{e},e)|^2 \biggr)^{p}\rangle^{{ \frac{1}{2p}}}\, ds\notag \\
&\lesssim C(\delta)\int_0^t (t-s)^{-(\frac d 2 +1)(1-\frac 1 p)}\biggl( \sum_b \omega_{\alpha}^{-2p}(t-s,\underline b) \langle | Du_s \cdot a Du_s|^{p(1+\delta)} \rangle^{\frac{1}{(1+\delta)}} \biggr)^{{ \frac{1}{2p}}}\, ds\\
&\stackrel{\alpha > \frac d 2} {\lesssim}C(\delta)\int_0^t (t-s)^{-(\frac d { 4} +{ \frac 1 2})(1-\frac 1 p)+ \frac d p}\langle | Du_s \cdot a Du_s|^{p(1+\delta)} \rangle^{\frac{1}{{ 2}p(1+\delta)}} \, ds
\end{align*}
and thus (\ref{T17}).

\medskip

We now prove (\ref{T19}): It holds
\begin{align*}
\langle\mathcal{Y}_{t-s}^{p-1}&\bigl(\sum_{e\in \pi^{-1}(b)}w(e)^{-1}w'(e)^{-1}\sum_{b'\in \pi'(e)}|\sqrt{a^{e}(b')}\nabla \bar u_s(a^{e},b')|^2 \bigr)^p\rangle\\
\leq &\langle\mathcal{Y}_{t-s}^{p-1}|\pi^{-1}(b)|^{p-1} \sum_{e\in \pi^{-1}(b)}w(e)^{-p}w'(e)^{-p}\bigl(\sum_{b'\in \pi'(e)}|\sqrt{a^{e}(b')}\nabla \bar u_s(a^{e},b')|^2 \bigr)^p\rangle\\
=& \langle \sum_{e\in \pi^{-1}(b)}\mathcal{Y}_{t-s}^{p-1}|\pi^{-1}(b)|^{p-1}w(e)^{-p}w'(e)^{-p}\bigl(\sum_{b'\in \pi'(e)}|\sqrt{a^{e}(b')}\nabla \bar u_s(a^{e},b')|^2 \bigr)^p\rangle.
\end{align*} 
H\"older's inequality with exponents $(1+\delta, \frac{1+\delta}{\delta})$ first in $e$ and then in $\langle \cdot \rangle$ yields
\begin{align}\label{T20}
\langle\mathcal{Y}_{t-s}^{p-1}\bigl(&\sum_{e\in \pi^{-1}(b)}w(e)^{-1}w'(e)^{-1}\sum_{b'\in \pi'(e)}|\sqrt{a^{e}(b')}\nabla \bar u_s(a^{e},b')|^2 \bigr)^p\rangle\notag\\
&\leq \langle \sum_{e\in \pi^{-1}(b)}\mathcal{Y}_{t-s}^{(p-1)\frac{1+\delta}{\delta}}|\pi^{-1}(b)|^{(p-1)\frac{1+\delta}{\delta}}w(e)^{-p\frac{1+\delta}{\delta}}w'(e)^{-p\frac{1+\delta}{\delta}}\rangle^{\frac{\delta}{1+\delta}}\notag\\
&\hspace{3cm}\times \langle \sum_{e\in \pi^{-1}(b)}\bigl(\sum_{b'\in \pi'(e)}|\sqrt{a^{e}(b')}\nabla \bar u_s(a^{e},b')|^2 \bigr)^{p(1+\delta)}\rangle^{\frac{1}{1+\delta}}
\end{align}
As the terms $\pi'(e)$ and $\sqrt{a^{e}(b')}\nabla \bar u_s(a^{e},b')$ are stationary respectively by Lemma \ref{w-mod} and by definition of stationary extension, we have that also $\sum_{b'\in \pi'(e)}|\sqrt{a^{e}(b')}\nabla \bar u_s(a^{e},b')|^2$ is stationary. Therefore, appealing to Lemma \ref{SUM} for every $\delta >0$ it holds
\begin{align*}
\langle\sum_{e\in \pi^{-1}(b)}&\bigl(\sum_{b'\in \pi'(e)}|\sqrt{a^{e}(b')}\nabla \bar u_s(a^{e},b')|^2 \bigr)^{p(1+\delta)}\rangle^{\frac{1}{1+\delta}}\notag\\
&\stackrel{(\ref{SUM2})}{\lesssim} C(\delta)\sum_{i=1}^{d}\langle \bigl(\sum_{b'\in \pi'(e_i)}|\sqrt{a^{e_i}(b')}\nabla \bar u_s(a^{e_i},b')|^2 \bigr)^{p(1+\delta)^2}\rangle^{\frac{1}{(1+\delta)^2}}\notag\\
&\stackrel{(\ref{SUM1})}{\lesssim}  C(\delta)\sum_{i=1}^{d}\langle |\sqrt{a^{e_i}}D u_s(a^{e_i})|^{2p(1+\delta)^3}\rangle^{\frac{1}{(1+\delta)^3}}.
\end{align*}
Since $a^{e}$ and $a$ have by definition the same law, we conclude that
\begin{align}\label{T21}
\langle\sum_{e\in \pi^{-1}(b)}\bigl(\sum_{b'\in \pi'(e)}|\sqrt{a^{e}(b')}\nabla \bar u_s(a^{e},b')|^2 &\bigr)^{p(1+\delta)}\rangle^{\frac{1}{1+\delta}}\notag\\
&\lesssim  C(\delta)\langle |\sqrt{a}D u_s(a)|^{2p(1+\delta)^3}\rangle^{\frac{1}{(1+\delta)^3}}.
\end{align}
We now turn to the other averaged term in (\ref{T20}): Reasoning as above, thanks to (iii) of Lemma \ref{w-mod} and (\ref{LG2b}) of Lemma \ref{LG}, we may apply (\ref{SUM1}) of Lemma \ref{SUM} and get that for every $\delta >0$ the last term on the r.h.s. of \eqref{T20} satisfies
\begin{align}\label{T22}
\langle \sum_{e\in \pi^{-1}(b)}\mathcal{Y}_{t-s}^{(p-1)\frac{1+\delta}{\delta}}&|\pi^{-1}(b)|^{(p-1)\frac{1+\delta}{\delta}}w(e)^{-p\frac{1+\delta}{\delta}}w'(e)^{-p\frac{1+\delta}{\delta}}\rangle\notag\\
\lesssim &\sum_{i=1}^d\langle \bigl(\mathcal{Y}_{t-s}^{(p-1)\frac{1+\delta}{\delta}}|\pi^{-1}(e_i)|^{(p-1)\frac{1+\delta}{\delta}}w(e_i)^{-p\frac{1+\delta}{\delta}}w'(e_i)^{-p\frac{1+\delta}{\delta}}\bigr)^{1+\delta}\rangle^{\frac{1}{1+\delta}}\notag\\
&\stackrel{(\ref{W2})-(\ref{W3})-(\ref{LG2c})}{\lesssim} C(\delta).
\end{align}
Inequalities (\ref{T21}) and (\ref{T22}) in (\ref{T20}) yield the bound (\ref{T19}), once that we relabel $\delta \simeq \delta^3 + 2\delta^2 + 2\delta$.

\bigskip

We are now ready to conclude the proof of Theorem \ref{t.main}, part (a): By (\ref{T16}) and (\ref{T17}), we get from (\ref{RF})
\begin{align*}
\langle |u_t|^{2p} \rangle^{{ \frac{1}{2p}}} \lesssim & \ C(\delta) \biggl( N||g||_{L^\infty(\Omega)} t^{-\frac{d}{ { 4}}}\\
 &+ \int_0^t (t-s)^{-(\frac {d} {{ 4}} +{ \frac 1 2})(1 -\frac 1 p)+ \frac{d}{2p}} \langle |D u_s \cdot a D u_s|^{p(1+\delta)} \rangle^{\frac{1}{{ 2}p(1+\delta)}} \, ds \biggr).
\end{align*}

By our assumption that $g\in L^\infty(\Omega)$ and the maximum principle, we have
\begin{equation*}  
\langle |D u_s \cdot a D u_s|^{p(1+\delta)} \rangle^{\frac{1}{{ 2}p(1+\delta)}} \lesssim ||g||_{L^\infty(\Omega)}^{\frac{\delta}{\delta + 1}} \langle |D u_s \cdot a D u_s|^{p} \rangle^{\frac{1}{{ 2}p(1+\delta)}},
\end{equation*}
and therefore
\begin{align*}  
\langle |u_t|^{2p} \rangle^{{ \frac{1}{2p}}} &\lesssim   C(\delta)\biggl( N ||g||_{L^\infty(\Omega)} t^{-\frac{ d} { 4}}\\
& + \|g\|_{L^\infty(\Omega)}^{\frac{\delta}{\delta + 1}}\int_0^t (t-s)^{-(\frac{d}{ { 4}} +{ \frac 1 2})(1 -\frac 1 p)+ \frac {d}{{ 2}p}} \langle |D u_s \cdot a D u_s|^{p} \rangle^{\frac{1}{{ 2}p(1+\delta)}} \, ds \biggr).
\end{align*}
For $p$ sufficiently large, we now use Lemma~\ref{ODE} with { $\gamma_0:= \frac d 4$, $\gamma_1:= (\frac{d}{ { 4}} +{ \frac 1 2})(1 -\frac 1 p)- \frac {d}{{ 2}p} \geq 1$}, any $\beta:= \frac{1}{1+\delta} > \frac{\gamma }{\gamma+ \frac {1}{2p}}$ and for the functions
\begin{align*}  
a(t) & := ( N \|g\|_{L^\infty(\Omega)} )^{-1} \langle |u_t|^{2p} \rangle^{{ \frac{1}{2p}}}, \\
b(t) & :=  ( N^{1+\delta} \|g\|_{L^\infty(\Omega)} )^{-1} \langle |D u_s \cdot a D u_s|^{p} \rangle^{{ \frac{1}{2p}}}.
\end{align*}
We remark that the relation~\eqref{ODE2} is satisfied since by Lemma~\ref{Mono} and $N \geq 1$ we have
\begin{align*}
b(t)^{{ 2}p} &= ( N^{1+\delta} \|g\|_{L^\infty(\Omega)} )^{-{{ 2}p}} \langle |D u_s \cdot a D u_s|^{p} \rangle \lesssim (N^{1+\delta} \|g\|_{L^\infty(\Omega)} )^{-{{ 2}p}}\biggl( - \frac{d}{dt}\langle |u_t|^{2p} \rangle \biggr)\\
&\lesssim ( N \|g\|_{L^\infty(\Omega)} )^{-{{ 2}p}}\biggl( - \frac{d}{dt}\langle |u_t|^{2p} \rangle \biggr) \simeq \biggl( - \frac{d}{dt} a^{{{ 2}p}}(t)\biggr).
\end{align*}
The embedding 
\begin{align}\label{emb}
\langle |u_t|^q \rangle^{\frac 1 q} \leq C(p,q) \langle |u_t |^p \rangle^{\frac 1 p}
\end{align}
allows to conclude the proof of Theorem \ref{Teo1}, part (a).

\bigskip

To prove also Theorem \ref{t.main}, part (b) we observe that thanks to the assumption $g = D^*f$ (and thus $\bar g = \nabla^*\bar f$), after an integration by parts, we may reformulate the estimate (\ref{RF}) as
\begin{align}\label{RF2}
 \langle |u_t|^{2p}\rangle^{{ \frac{1}{2p}}} \lesssim \langle \biggl(\sum_{e} \bigl( \sum_{b} \nabla p_t(b,0)\partial_{e}\overline{f}&(b) \bigr)^{2}\biggr)^{p}\rangle^{{ \frac{1}{2p}}}\notag\\
 + &\int_0^t \langle\biggl(\sum_e |\nabla p_{t-s}(e,0)|^2| \nabla \bar u_s(a^{e},e)|^2 \biggr)^{p}\rangle^{{ \frac{1}{2p}}}.
\end{align}
We control the second term on the r.h.s. again by (\ref{T17}). For the first term, we now argue that 
\begin{align}\label{T18}
\langle \biggl(\sum_{e} \bigl( \sum_{b} \nabla p_t(b,0)\partial_{e}&\overline{f}(b) \bigr)^{2}\biggr)^{p}\rangle^{\frac{1}{p}}\lesssim N^{ 2} ||f||^{2}_{ L^\infty(\Omega)}t^{-(\frac d 2 + 1)(1 - \frac 1 p)+ \frac d p},
\end{align}
with $N$ the number of sites on which $f$ depends.

\medskip

Before proving (\ref{T18}), we show how to conclude the argument: Inserting  inequalities (\ref{T18}),(\ref{T16}) in (\ref{RF2}) and using the maximum principle and that $f \in L^\infty(\Omega)$, we get
\begin{multline*}  
 \langle |u_t|^{2p}\rangle^{{ \frac{1}{2p}}} \lesssim C(\delta)\biggl( N ||f||_{ L^\infty(\Omega)} \, t^{-(\frac{d}{ 4} +{ \frac 1 2})(1-\frac 1 p)  + \frac{d}{2p}} \\
 + \|f\|_{L^\infty(\Omega)}^{\frac{\delta}{1+\delta}} \int_0^t (t-s)^{-( \frac{d}{{ 4}} +{ \frac 1 2 })(1-\frac 1 p)  + \frac{d}{2p}} \langle |D u_s \cdot a D u_s|^{p} \rangle^{\frac{1}{{ 2}p(1+\delta)}} \, ds \biggr).
\end{multline*}
As for the proof of part (a), we multiply the previous inequality by $\bigl( C(\delta) N ||f||_{L^\infty(\Omega)} \bigr)^{-1}$ and use Lemma \ref{Mono} and Lemma \ref{ODE} applied to
\begin{align*}  
a(t) & := \bigl( N ||f||_{L^\infty(\Omega)} \bigr)^{-1} \langle |u_t|^{2p} \rangle^{{ \frac{1}{2p}}}, \\
b(t) & := \bigl( N^{1+\delta} ||f||_{L^\infty(\Omega)} \bigr)^{-1} \langle |D u_s \cdot a D u_s|^{p} \rangle^{{ \frac{1}{2p}}} ,
\end{align*}\
with { $\gamma_0=\gamma:= (\frac d 4 +\frac 1 2)(1-\frac 1 p) - \frac {d}{ 2p}$} and any $\beta:= \frac{1}{1+\delta} > \frac{\gamma }{\gamma+ \frac {1}{ 2p}}$ to obtain
\begin{align*}
 \langle |u_t|^{2p}\rangle^{\frac{1}{p}} \lesssim N^{ 2} ||f||^{2}_{ L^\infty(\Omega)} t^{-(\frac d 2 +1)(1-\frac 1 p) + \frac d p} \lesssim N^{ 2}||f||^{2}_{ L^\infty(\Omega)}  t^{-(\frac d 2 +1) + \frac{d}{2p}}.
\end{align*}
Therefore, for every $q \in [1, +\infty)$ fixed, let us consider $\varepsilon > 0$; we choose $q \leq p < +\infty$ such that $\frac{d}{2p} \leq \varepsilon$ and use the previous estimate together with the embedding \eqref{emb} to infer 
\begin{align*}
\langle |u_t |^{2q} \rangle^{\frac 1 q} \lesssim C(\varepsilon) N^{ 2} ||f||^{2}_{ L^\infty(\Omega)} t^{-(\frac d 2 +1) + \frac{d}{2p}} \lesssim C(\varepsilon) N^{ 2} ||f||^{2}_{ L^\infty(\Omega)} t^{-(\frac d 2 +1) + \varepsilon},
\end{align*}
i.e. bound (\ref{Teo2}).

\medskip

It thus only remains to show (\ref{T18}): As in (\ref{T15}), we bound the l.h.s of (\ref{T18}) by
\begin{align}\label{T24}
\langle \biggl(\sum_{e} \bigl( \sum_{b} \nabla p_t(b,0)\partial_{e}&\overline{f}(z) \bigr)^{2}\biggr)^{p}\rangle^{{ \frac{1}{2p}}}\notag\\
\leq &\sum_{i=1, ...,d}\sum_{z} \langle \biggl(\sum_{b} |\nabla  p_t(b,0)|^2|\overline{\partial_{\{z,z+e_i\}}f}(b)|^2 \biggr)^{p}\rangle^{{ \frac{1}{2p}}}.
\end{align}
For any fixed $i=1,...,d$, inequality (\ref{W1b}) of Lemma \ref{w-mod} yields
\begin{align*}
\sum_{z} \langle \biggl(\sum_{b}|\nabla & p_t(b,0)|^2|\overline{\partial_{\{z,z+e_i\}}f}(b)|^2 \biggr)^{p}\rangle^{{ \frac{1}{2p}}}\\
&\leq \sum_{z} \langle \biggl(\sum_{b} w(b)^{-1}\sum_{b'\in \pi(b)}|\sqrt{a(b')}\nabla  p_t(b',0)|^2|\overline{\partial_{\{z,z+e_i\}}f}(b)|^2 \biggr)^{p}\rangle^{{ \frac{1}{2p}}}\\
&= \sum_{z} \langle \biggl(\sum_{b'} |\sqrt{a(b')}\nabla  p_t(b',0)|^2\sum_{b\in \pi^{-1}(b')}w(b)^{-2}|\overline{\partial_{\{z,z+e_i\}}f}(b)|^2 \biggr)^{p}\rangle^{{ \frac{1}{2p}}}
\end{align*}
After smuggling the weight $\omega_\alpha^2(t, '\underline b)$ into the sum over $b'$, H\"older's inequality with exponents $p$ and $q$ implies
\begin{align*}
\sum_{z} \langle \biggl(\sum_{b}|\nabla & p_t(b,0)|^2|\overline{\partial_{\{z,z+e_i\}}f}(b)|^2 \biggr)^{p}\rangle^{{ \frac{1}{2p}}}\\
&\lesssim \sum_{z} \langle \biggl(\sum_{b'}\omega_\alpha^{2q}(t, \underline b') |\sqrt{a(b')}\nabla  p_t(b',0)|^{2q} \biggr)^{p-1}\\
&\hspace{2cm} \times\biggl(\sum_{b'} \omega_\alpha^{-2p}(t, \underline b')\bigl(\sum_{b\in \pi^{-1}(b')}w(b)^{-1}|\overline{\partial_{\{z,z+e_i\}}f}(b)|^2\bigr)^p \biggr)\rangle^{{ \frac{1}{2p}}},
\end{align*}
and thus by Lemma \ref{LG},
\begin{align}\label{T25}
\sum_{z} \langle \biggl(\sum_{b}&|\nabla  p_t(b,0)|^2|\overline{\partial_{\{z,z+e_i\}}f}(b)|^2 \biggr)^{p}\rangle^{{ \frac{1}{2p}}}\notag\\
&\lesssim t^{-(\frac {d}{ 4} + {  \frac 1 2})(1 - \frac 1 p)}\sum_{z} \biggl(\sum_{b'} \omega_\alpha^{-2p}(t, \underline b') \langle\bigl(\sum_{b\in \pi^{-1}(b')}w(b)^{-1}|\overline{\partial_{\{z,z+e_i\}}f}(b)|^2\bigr)^p\rangle \biggr)^{{ \frac{1}{2p}}}.
\end{align}
We now show that
\begin{align}\label{T26}
\langle\bigl(\sum_{b\in \pi^{-1}(b')}w(b)^{-1}|\overline{\partial_{\{z,z+e_i\}}f}(b)|^2\bigr)^p\rangle \lesssim ||\partial_{\{z,z+e_i \}}f||^{2p}_{ L^\infty(\Omega)}.
\end{align}
Once proven the previous inequality, estimate (\ref{T25}) turns into 
\begin{align*}
\sum_{z} \langle \biggl(\sum_{b}&|\nabla  p_t(b,0)|^2|\overline{\partial_{\{z,z+e_i\}}f}(b)|^2 \biggr)^{p}\rangle^{{ \frac{1}{2p}}}\notag\\
&\lesssim  t^{-(\frac {d}{ 4} + {  \frac 1 2})(1 - \frac 1 p)}\sum_{z} \biggl(\sum_{b'} \omega_\alpha^{-2p}(\underline b',t) \biggr)^{{ \frac{1}{2p}}} ||\partial_{\{z,z+e_i \}}f||_{ L^\infty(\Omega)}\\
&\lesssim t^{-(\frac {d}{ 4} + {  \frac 1 2})+ \frac {d}{2 p}} \sum_{z} ||\partial_{\{z,z+e_i \}}f||_{ L^\infty(\Omega)}\\
&\lesssim Nt^{--(\frac {d}{ 4} + {  \frac 1 2})+ \frac {d}{2 p}} ||f||_{ L^\infty(\Omega)},
\end{align*}
where for the last inequality we observe that $\partial_{z, z+e_i}f=0$ if $\{ z, z+e_i \} \notin$ supp$(f)$.
The bound (\ref{T18}) follows by (\ref{T24}).

\medskip

To obtain (\ref{T26}), we reason similarly to (\ref{T21}) and (\ref{T22}): We write
\begin{align*}
\langle \bigl(\sum_{b\in \pi^{-1}(b')}&w(b)^{-1}|\overline{\partial_{\{z,z+e_i\}}f}(b)|^2\bigr)^p\rangle\\
&\stackrel{(\ref{SUM1})}{\lesssim}\sum_{j=1}^d C(\delta) \langle w(e_j)^{-p(1+\delta)}  |\overline{\partial_{\{z,z+e_i\}}f}(e_j)|^{2p(1+\delta)}| \rangle^{\frac{1}{1+\delta}}\\
&\stackrel{\delta=1}{\lesssim}\sum_{j=1}^d  \langle w(e_j)^{-2p}  |\overline{\partial_{\{z,z+e_i\}}f}(e_j)|^{4p}| \rangle^{\frac{1}{2}}.
\end{align*}
Appealing  to the assumption $f\in L^\infty(\Omega)$ and to the bound (\ref{W2}) of Lemma \ref{w-mod}, we infer (\ref{T26}).
\end{proof}
\section{Appendix}
\begin{proof}[Proof of Lemma \ref{MaxF}] 
We start observing that it is enough to show an analogue of the maximal function estimate (\cite{AK}, Theorem 3.2), namely that for a constant $C=C(d, \alpha)< +\infty$ it holds
\begin{align}\label{MF1}
\P( | \sup_{t > 0} \, {t}^{-\frac d 2} \sum_{z} \omega_{\alpha}^{-2}(z, t) \mathcal{Z}(\tau_z a) | > \lambda ) \leq C \frac{1}{\lambda}\langle |\mathcal{Z}| \rangle.
\end{align}
This indeed, combined with the Marcinkiewicz Interpolation Theorem and the fact that the map 
\begin{align*}
\mathcal{Z} \rightarrow \sup_{t > 0}  \biggl( {t}^{-\frac d 2} \sum_{z} \omega_{\alpha}^{-2}(z, t) \mathcal{Z}(\tau_z a) \biggr)
\end{align*}
is bounded from $L^\infty$ to $L^\infty$, yields for every $p \in (1, +\infty]$
\begin{align*}
\langle | \sup_{t > 0}\, \biggl( {t}^{-\frac d 2} \sum_{z} \omega_{\alpha}^{-2}(z, t) \mathcal{Z}(\tau_z a) \biggr)|^p \rangle \lesssim \langle |\mathcal{Z}|^p\rangle
\end{align*}
and thus also (\ref{MFa}) by assumption (\ref{MFb}). Above and in the rest of the proof $\lesssim$ stands for $\leq C(d, \alpha, p)$.

\medskip

We now give the argument for (\ref{MF1}): We observe that it holds
\begin{align*}
\P( &\, \sup_{t > 0} \,\biggl( {t}^{-\frac d 2} \sum_{z} \omega_{\alpha}^{-2}(z, t) \mathcal{Z}(\tau_z a)\biggr) > \lambda )\\
&=\P( \sup_{t > 0}\biggl( {t}^{-\frac d 2} \sum_{n=1}^\infty \sum_{(n-1)t^{\frac 1 2} \leq |z| < nt^{\frac 1 2}} \omega_{\alpha}^{-2}(z, t) \mathcal{Z}(\tau_z a) \biggr)  > \lambda )\\
&=\P(  \exists \  t \in (0, +\infty) \, : \, {t}^{-\frac d 2}\sum_{n=1}^{\infty}\sum_{(n-1)t^{\frac 1 2}\leq |z| < nt^{\frac 1 2}} \omega_{\alpha}^{-2}(z, t) \mathcal{Z}(\tau_z a) > \lambda )\\
&=\P(  \exists \  t\in (0, +\infty) , n\in \N \, : \, {t}^{-\frac d 2}\sum_{(n-1)t^{\frac 1 2}\leq |z| < nt^{\frac 1 2}} \omega_{\alpha}^{-2}(z, t) \mathcal{Z}(\tau_z a) \gtrsim \frac{\lambda}{n^2}\,).
\end{align*}
Since $\mathcal{Z} \geq 0$ and
$$
 \sup_{(n-1)t^{\frac 1 2}\leq |z| < nt^{\frac 1 2}}\omega_{\alpha}^{-2}(z,t) \lesssim n^{-2\alpha},
$$ 
we get
\begin{align*}
\P( &\, \sup_{t > 0}\,\biggl( {t}^{-\frac d 2} \sum_{z} \omega_{\alpha}^{-2}(z, t) \mathcal{Z}(\tau_z a)\biggr) > \lambda )\\
&\leq \P(  \exists \  t\in (0, +\infty) , n\in \N \, : \, n^{-d}{t}^{-\frac d 2}\sum_{(n-1)t^{\frac 1 2}\leq |z| < nt^{\frac 1 2}}\mathcal{Z}(\tau_z a) \gtrsim n^{2\alpha - d-2}\lambda\,)\\
&\leq \P( \exists \  t\in (0, +\infty) , n\in \N \, : \, n^{-d}{t}^{-\frac d 2}\sum_{|z| < nt^{\frac 1 2}}\mathcal{Z}(\tau_z a) \gtrsim n^{2\alpha - d-2}\lambda\,)\\
&\leq \sum_{m=1}^\infty\P( \exists \  t\in (0, +\infty) , n\in \N \, : \, n^{-d}{t}^{-\frac d 2}\sum_{|z| < nt^{\frac 1 2}}\mathcal{Z}(\tau_z a) \gtrsim m^{2\alpha - d-2}\lambda\,)\\
&\leq \sum_{m=1}^\infty\P( \sup_{R > 0} \biggl( R^{-d}\sum_{|z| < R}\mathcal{Z}(\tau_z a)\biggr) \gtrsim m^{2\alpha - d-2}\lambda\,).
\end{align*}
We may now use in this last line the standard maximal function estimate (\cite{AK}, Theorem 3.2)
\begin{align*}
\P(\sup_{R>0} \biggl( R^{-d}\sum_{|z|< R}\mathcal{Z}(\tau_z a)\biggr) > \delta) \leq \frac{\langle |\mathcal{Z}|\rangle}{\delta}
\end{align*}
to conclude 
\begin{align*}
\P(  \sup_{t > 0}& \,\biggl( {t}^{-\frac d 2} \sum_{z} \omega_{\alpha}^{-2}(z, t) \mathcal{Z}(\tau_z a)\biggr)> \lambda )\lesssim \sum_{m=1}^\infty  \frac{\langle |\mathcal{Z}|\rangle}{m^{2\alpha-2-d}\lambda} \stackrel{\alpha > \frac d 2 + 1}{\lesssim} \frac{\langle |\mathcal{Z}|\rangle}{\lambda}.
\end{align*}
\end{proof}
\bibliographystyle{abbrv}
\bibliography{Quantitative_Hom_Deg_Environments_rev.bbl}
\end{document}